\documentclass[11pt]{amsart}
\topmargin -0.27in \oddsidemargin 0.06in \evensidemargin 0.06in \textwidth 6.4in
\textheight=8.95in

\usepackage{amsmath,amsthm,amssymb,graphicx,mathtools,tikz,hyperref}
\usetikzlibrary{positioning}
\newcommand{\n}{\mathbb{N}}
\newcommand{\z}{\mathbb{Z}}

\newcommand{\cx}{\mathbb{C}}
\newcommand{\real}{\mathbb{R}}

\newtheorem{theorem}{Theorem}[section]
\newtheorem{proposition}[theorem]{Proposition}
\newtheorem{corollary}{Corollary}[theorem]
\newtheorem{lemma}[theorem]{Lemma}

\theoremstyle{definition}
\newtheorem*{definition}{Definition}
\hypersetup{
	colorlinks,
	linkcolor=blue
}
\usepackage{setspace}
\onehalfspacing

\begin{document}
	\date{}
	
	\title{On the density of eigenvalues on periodic graphs}
	\author{Cosmas Kravaris} 
	
	\maketitle
	
\begin{abstract}
Suppose that $\Gamma=(V,E)$ is a graph with vertices $V$, edges $E$, a free group action on the vertices $\z^d \curvearrowright V$ with finitely many orbits, and a linear operator $D$ on the Hilbert space $l^2(V)$ such that $D$ commutes with the group action.
Fix $\lambda \in \mathbb{R}$ in the pure-point spectrum of $D$ and consider the vector space of all eigenfunctions of finite support $K$.
Then $K$ is a non-trivial finitely generated module over the ring of Laurent polynomials, and the density of $\lambda$ is given by an Euler-characteristic type formula by taking a finite free resolution of $K$.
Furthermore, these claims generalize under suitable assumptions to the non-commutative setting of a finite generated amenable group acting on the vertices freely with finitely many orbits, and commuting with the operator $D$.
\end{abstract}

\section{Introduction}

A major object of study in mathematical solid state physics is a crystal, a body of matter whose molecular structure is periodic.
Many discrete models of crystals consist of a graph $\Gamma=(V,E)$ with vertices $V$, edges $E$, a free group action on the vertices $\z^d \curvearrowright V$ with finitely many orbits, and a linear operator $D$ of finite order on the Hilbert space $l^2(V)$ such that $D$ commutes with the group action.

The following are two examples in their simplest forms.
The \textbf{discrete Schrödiger operator} acting on $l^2(\z^2)$ is given by
$$Df(m,n):=f(m+1,n) + f(m-1,n) + f(m,n+1) + f(m,n-1) + (q-4) f(m,n),$$
where $f\in l^2(\z^2)$, $m,n\in\z$, and $q\in \mathbb{R}$ is a uniform electric potential.
The \textbf{discrete magnetic Laplacian operator} or \textbf{Harper operator} \cite{ShubinMagnetic, SunadaMagnetic} acting on on $l^2(\z^2)$ is given by
$$Df(m,n):=e^{-i\pi\alpha n} f(m+1,n) + e^{i\pi\alpha n} f(m-1,n) + e^{i\pi\alpha m} f(m,n+1) + e^{-i\pi\alpha m} f(m,n-1),$$
where $f\in l^2(\z^2)$, $m,n\in\z$, and $\alpha \in \mathbb{R}$ is a uniform magnetic flux.
In both cases, the underlying graph $\Gamma$ is the square grid on the plane.
In the first case, the group action of $\z^2$ is given by unit 1 vertical and horizontal translations:
$$(x,y)\cdot(m,n):=(m+x,n+y)\;\;\;\;where\;\;x,y,m,n\in\z.$$
In the latter case, $D$ is periodic only when $\alpha$ is rational, say $\alpha=p/q$ where $p\in\z, q\in\mathbb{N}$.
Then the group action is given by unit $2q$ vertical and horizontal translations:
$$(x,y)\cdot(m,n):=(m + 2q x , n + 2q y)\;\;\;\;where\;\;x,y,m,n\in\z.$$
The spectrum of the above operators corresponds to the energy of the modeled crystal, and physical phenomena can be described by further spectral properties of $D$.

One example of such a spectral property is the \textbf{Bloch variety}, also known as the \textbf{dispersion relation}. 
It is an algebraic variety defined via the Floquet-Bloch transform $\widehat{D}$ of the operator $D$.
Recently, there has been a lot of work on the algebraic properties of the Bloch variety and their relations to spectral and physical properties.
For example, the question of whether the Bloch variety is generically the union of graphs of Morse functions is fundamental for the notion of effective mass in solid state physics \cite{DoKuchSott}.
Other algebraic properties of interest include the irreducibly of the Bloch variety \cite{Liu20, FLM} and toric compactifications \cite{DoKuchSott, FaustSottile}.
Furthermore, the book of Gieseker, Knoerrer and Trubowiz \cite{GKT} uses the algebraic geometry of the Bloch variety to study the density of states.

In this article, we study a new algebraic structure: the algebraic structure of \textbf{finite support eigenfunctions}.
Fix $\lambda \in \mathbb{R}$ in the pure-point spectrum of $D$, that is, the set of genuine eigenvalues.
Elements in the pure-point spectrum are in one to one correspondence to flat sheets in the Bloch variety.
By work of Kuchment \cite{Kuch}, there exists a non-zero eigenfunction $f$ of $\lambda$ (i.e. $D f = \lambda f$) which has finite support.
Therefore, the vector space of all eigenfunctions of finite support is nontrivial. We denote it by $K$.

The key insight of this article is that $K$ is a finitely generated $\cx[\z^d]$-module (where $\cx[\z^d] = \cx[z_1^{\pm 1},\dots,z_d^{\pm 1}]$ is the group algebra of $\z^d$, which is the same as the ring of Laurent polynomials).
Although finite support eigenfunctions have been studied before in \cite{Kuch, VeselicPercolationHamiltonians, Higuchi:Nomura:Paper, KagomeAndFiniteSupport, LenzVeselicUniformApproximationAndExamples}, the module structure was not used nor noted.
It can be described as follows.
Pick a free generating set $e_1, \dots, e_d$ of $\z^d$.
For each $e_i$ ($1\leq i \leq d$) and each $f \in K$ define
$$e_i \cdot f(v) := f(e_i^{-1} \cdot v) \;\;\;\; v \in V.$$
Via linearization, we get an action $\cx[\z^d] \curvearrowright K$ making $K$ a $\cx[\z^d]$-module.

The main result of this paper is an Euler-characteristic-type formula for the density of $\{\lambda\}$, denoted by $\nu(\{\lambda\})$.
The notion of the \textbf{density of states measure} originated in solid state physics \cite{KittelCondensedMatter} where it is, roughly speaking, the number of states (eigenfunctions) per unit volume whose energy (eigenvalue) lies in a given range.
For more on the density of states, see Section 2.
We write $R$ for the ring of Laurent polynomials $\cx[z_1^{\pm 1},\dots,z_d^{\pm 1}]$.
Here is the main result:

\begin{theorem}
	Let $K$ be the $R$-module of finite support eigenfunctions of $\lambda \in \mathbb{R}$ in the pure-point spectrum of $D$.
	Taking a finite resolution of $K$ by finite rank $R$-modules
	$$0 \rightarrow R^{r_d} \rightarrow R^{r_{d-1}} \;\dots\; \rightarrow R^{r_1} \rightarrow R^{r_0} \rightarrow K \rightarrow 0$$
	the density of states measure of $\{\lambda\}$ is given by the alternating sum of the ranks
	$$ \nu(\{\lambda\}) = \dfrac{1}{|V/\z^d|} \sum_{k=0}^{d} (-1)^k r_k $$
	where $|V/\z^d|$ is the number of orbits of the action $\z^d \curvearrowright V$.
\end{theorem}

The above resolution always exists due to the classical Hilbert-Syzygy theorem, and there exist well-known algorithms for the computation of $r_0, \dots, r_d$ \cite{UsingAlgGeom}.
Also, this result is very different from the results in \cite{GKT}, which use toroidal compactifications.

A lot of the results generalize to the non-commutative setting: that is, replacing the action $\z^d \curvearrowright V$ with an action on the vertices by a finitely generated group $G$.
In this case, the space of all finite support eigenfunctions is a module over the group algebra $\cx G$.
The notion of density of states also exists in the non-commutative setting (often under the names von Neumann trace and von Neumann dimension), and is intricately connected to the study of $l^2$-invariants in geometry and $K$-theory \cite{LuckLTwoInvariants}. 
For instance, a finitely generated torsion-free group $G$ satisfies the \textbf{strong Atiyah conjecture over $\cx$} if and only if for every $G$-periodic graph $\Gamma$, and linear periodic operator $D$ on $\Gamma$ of finite order, 
$|\Gamma/G|\cdot \nu(\{0\}) \in \z$.
The conjecture is known for large classes of groups; see Chapter 10 in \cite{LuckLTwoInvariants} for more details.
Theorem 1.1 implies the following well-known result:
\begin{corollary}
	The strong Atiyah conjecture over $\cx$ holds for free abelian groups.
\end{corollary}
Note that there are many other algebraic approaches related to the strong Atiyah conjecture, yielding much stronger results.
For example, in \cite{StrongAtiyahAndAlgebra} the conjecture is studied over the algebraic numbers $\bar{\mathbb{Q}}$.
Furthermore, the authors in \cite{StrongAtiyahAndAlgebra} prove an approximation result for the density of eigenvalues for direct and inverse limits of groups (which is completely different from Theorem 1.1 and from Lemma 5.1).

In group theory, the density of states of the Markov operator $M$ on the Cayley graph $\Gamma(G,S)$ of a finitely generated group $G$ is called the \textbf{spectral measure} of $G$ with respect to the generating set $S$.
In \cite{KestenClassic}, Kesten initiated the study of spectral measures of groups.
He showed that $G$ is amenable if and only if $1$ belongs to the spectrum of $M$ and computed the spectral measures of the free groups, all of which are absolutely continuous with respect to the Lebesgue measure.
The first example of a group with purely discrete spectral measure is the lamplighter group, as shown by Grigorchuk and Zuk in \cite{GriZukLamplighter}.
On the other hand, in \cite{GriPittetNoEigenv}, Grigorchuk and Pittet showed that for a different generating set, the lamplighter group has continuous spectral measure, so the pure-point spectrum is empty.
This indicates that the behavior of the spectral measure of a group depends on the generating set.

We begin with an introductory section on amenable $G$-periodic graphs, and recall the existence and density of finite support eigenfunctions.
Next, we focus on abelian $G$-periodic graphs, review the Floquet-Bloch transform, and show that for each eigenvalue, there are finitely many finite support eigenfunctions up to translations and linear combinations. 
We then generalize this claim to amenable $G$-periodic graphs with Noetherian group algebra and apply it to approximate the density of an eigenvalue using finite support eigenfunctions.
Next, we review syzygy modules and free resolutions and prove the main result (Theorem 1.1).
When $G$ has subexponential growth, we generalize the main result, provided that a finite free resolution exists.
Finally, we provide an example of a module of finite support eigenfunctions which is not free, and take a free resolution to compute the density of the corresponding eigenvalue via Theorem 1.1.
Some of the proofs of known results are included in Appendix A, and the computations for the example are included in Appendix B.

\section{Preliminaries on Amenable Periodic Graphs}

Let $\Gamma=(V,E)$ be a locally finite graph (so the degree of each vertex is finite) with set of vertices $V$ and set of edges $E$.
All graphs in this article will be simple and undirected.
Consider the Hilbert space of all complex valued square summable functions on $V$, denoted by $l^2(V)$.
We define the \textbf{discrete Laplacian} $\Delta: l^2(V) \rightarrow l^2(V)$ as
$$ \Delta f (v) := \dfrac{1}{\sqrt{\deg_{\Gamma}v}}\sum_{w\sim v} \; \Bigl(\dfrac{f(w)}{\sqrt{\deg_{\Gamma}w}}-\dfrac{f(v)}{\sqrt{\deg_{\Gamma}v}}\Bigr), $$ 
where $\deg_{\Gamma}v$ is the degree of the vertex $v$.
The operator $\Delta$ defines a self-adjoint operator on $l^2(V)$. 
We consider the \textbf{spectrum} $sp(\Delta)$ of $\Delta$, which is the set of all $\lambda\in \cx$ such that $(\Delta -\lambda I)$ does not have a bounded inverse. 
Since $\Delta$ is self-adjoint, $sp(\Delta) \subset \real$. 
The \textbf{pure-point spectrum} is the set of all $\lambda\in \cx$ for which $(\Delta -\lambda I)$ is not injective. 
An element $\lambda$ of the pure-point spectrum is called an \textbf{eigenvalue}. 
A function $f \in l^2(V)$ such that $(\Delta -\lambda I) f = 0$ is an \textbf{eigenfunction} of $\Delta$ corresponding to the eigenvalue $\lambda$ and the space of all such functions is the \textbf{eigenspace} of $\Delta$ corresponding to $\lambda$.

There are many variations on this study \cite{IDS_for_PDE_Survey, survey, AnalysisOnGraphsSurvey}. The most obvious one is to consider the \textbf{adjacency operator} $A$, \textbf{Markov operator} $M$ and \textbf{Schrödinger operator} $\Delta + q$ 
(where $q: V \rightarrow \mathbb{R}$ is bounded) defined as
$$ A  f (v) := \sum_{w\sim v} f (w) \;\;\;\;\;\;\;\; M  f (v) := \dfrac{1}{\deg_{\Gamma}v}\sum_{w\sim v} f(w) \;\;\;\;\;\;\;\; (\Delta + q)  f (v) := \Delta f(v) + q(v).$$
Another variation is to account for multiple edges, or to use edge weights.
Furthermore, one may also consider quantum graphs \cite{BerkKuchQuantum} in which edges are treated as unit intervals and analysis is performed on them.	

Now suppose that $G$ is a discrete group with finite generating set $S$.
We call $G$ \textbf{amenable} when there exists a sequence $\{F_j\}_{j=1}^{\infty}$ of finite subsets of $G$ such that 
$$\dfrac{|(F_j \cdot S) \setminus F_j|}{|F_j|} \to 0 \;\;as\;\; j \to \infty.$$ 
The sequence $\{F_j\}_j$ is called a \textbf{Følner sequence} for $(G,S)$.
Note that the amenability of $G$ does not depend on the finite generating set $S$ \cite{GGTDrutuKapovich}.

\begin{definition}
	Let G be a finitely generated group.
	A \textbf{$G$-periodic graph} is a graph $\Gamma=(V,E)$ which admits a free, cofinite and edge preserving action $G \curvearrowright V$. More precisely:
	\\ i. $G \curvearrowright V$ is a free action on the set of vertices $V$.
	\\ ii. The orbit space $V/G$ is finite.
	\\ iii. For all $g \in G, u, v, \in V$ 
	$$ (u,v) \in E \implies (g \cdot u,g \cdot v) \in E.$$
	Choosing one vertex from each orbit of the group action, we obtain a \textbf{fundamental domain} $W \subset V$ which is a finite subset (by ii).
	If $G$ is amenable, we call $\Gamma$ an amenable $G$-periodic graph.
	If $G$ is abelian, we call $\Gamma$ an abelian $G$-periodic graph.
\end{definition}

\textbf{For the rest of this section} $\Gamma = (V,E)$ will always denote a $G$-periodic graph.

\begin{definition}
	The \textbf{left-regular representation} of G associated to $\Gamma$ is the map $\pi: G \rightarrow \mathcal{U}(l^2(V))$ defined by
	$$ \pi_g f(v) := f(g^{-1} \cdot v) \;\;\; v \in V, f \in l^2(V), g \in G.$$
	It is a unitary representation of $G$ into the space of bounded unitary operators on $l^2(V)$.
	An operator $T: l^2(V) \rightarrow l^2(V)$ is called \textbf{periodic} whenever it commutes with the left-regular representation, i.e. $\pi_g T f = T \pi_g f$ for any $f\in l^2(V),\;g\in G$.
	Since $G \curvearrowright V$ preserves edges, it follows that the discrete Laplacian is periodic.
\end{definition}

\begin{definition}
	For any vertices $u,v \in V$ denote by $d(v,w)$ the length of the shortest path in $\Gamma$ from $v$ to $w$,
	taking value $\infty$ when no such path exists.
	The \textbf{r-thick boundary} of a subset $F \subset V$ (where $r \in \n$) is defined to be:
	$$\partial_r F := \{v \in V \backslash F : \;there\;exists\;\; w \in F \;\;with\;\; d(v,w) \leq r\}.$$
\end{definition}

Note that $\{F_j\}_j$ is a Følner sequence for $(G,S)$ when $|\partial_1 F_j|/|F_j| \to 0$ on the Cayley graph $\Gamma(G,S)$ (its set of vertices is $G$ and set of edges is $\{(g, g \cdot s):g \in G\;s\in S\}$).
Also note that when $f \in l^2(V)$ is an eigenfunction and there exists $F \subset G$ with $f \equiv 0$ on $\partial_2 F$, then $I_F f$ is also an eigenfunction.
Here, $I_F$ is the projection operator onto functions supported on $F$, i.e. $I_F f (v) = f (v)$ when $v \in F$ and is $0$ otherwise.
In general, an operator $T: l^2(V) \rightarrow l^2(V)$ is said to be of \textbf{finite order r} if for any $f, g \in l^2(V), v \in V$,
$$f(w)=g(w) \;\;for\;all\;\; w \;\;with\;\; d(w,v) < r \implies Tf(v) = Tg(v).$$
The operator $T$ then has the following property: when $f \in l^2(V)$ is an eigenfunction associated to $T$ and there exists $F \subset V$ with $f \equiv 0$ on $\partial_r F$, then $I_F f$ is also an eigenfunction.
The discrete Laplacian, the Adjacency, the Markov and the Schrödinger operators are all of order 2.
Although we focus in this paper on the discrete Laplacian, all of the claims and techniques hold for periodic linear operators of finite order on a periodic graph (with minor changes in the constants related to the order of the operator).

The following technical lemma is well-known. 
For completeness' sake, its proof is included in Appendix A.

\begin{lemma}[Thick Følner sequences]
	If $G$ is amenable,
	then for any thickness $r \in \n$, 
	generating set $S$ of $G$
	and fundamental domain $W$ of $\Gamma$
	there exists $l \in \n$ and a sequence of finite subsets 
	$\mathcal{F}_j \subset G$ 
	such that the sequence 
	$F_j := \mathcal{F}_j \cdot W \subset V$ 
	satisfies
	$$\partial_r F_j \subset (\partial_l \mathcal{F}_j)\cdot W\;\;for\;all\;j,\;\;\;and\;\;\;\dfrac{|\partial_r F_j|}{|F_j|} \leq \dfrac{|(\partial_l \mathcal{F}_j)\cdot W|}{|F_j|} \to 0 \;as\;j\to\infty.$$
	We call the sequence $\{F_j\}$ a \textbf{standard r-thick Følner sequence} 
	with respect to a fixed fundamental domain $W$ and generating set $S$ of $G$.
\end{lemma}

We next discuss the concept of density.
According to the spectral theorem of self-adjoint operators (see \cite{ReedSimonClassic}), from $\Delta$ we obtain a spectral measure $E$ whose input are Borel sets and outputs are projections on $l^2(V)$.
In the case $B=\{\lambda\}$ where $\lambda$ is an eigenvalue, $E(\{\lambda\})$ is the orthonormal projection onto the eigenspace of $\lambda$.
We will denote this eigenspace by $E_{\lambda}$.

\begin{definition}
	Fix a fundamental domain $W$ of $\Gamma$.
	The \textbf{density} or \textbf{density of states measure} or \textbf{von Neumann trace} of a Borel subset $B \subset sp(\Delta)$ is
	$$ \nu(B) := \dfrac{1}{|W|} tr(E(B)I_W),$$
	where $tr(\cdot)$ is the usual trace of a Hilbert space operator and $I_W$ is the standard projection $l^2(V) \twoheadrightarrow l^2(W)$ (which is a finite rank operator hence $E(B)I_W$ is of trace class).
\end{definition}

Note that we may commute the operators inside the trace: $tr(E(B)I_W) = tr(I_W E(B)) = tr(I_W E(B) I_W)$ (see \cite{ReedSimonClassic}).
From the spectral theorem, it follows that $\nu(\cdot)$ is a measure on $sp(\Delta)$.
It is well known (for instance see \cite{GriPittetNoEigenv}) that this measure is purely continuous except a set of point masses which occur precisely at the point spectrum of $\Delta$ (i.e. the set of eigenvalues).
When $\lambda$ is an eigenvalue, $\nu(\{\lambda\})$ is called the \textbf{von Neumann dimension} of the eigenspace $E_\lambda$ (see \cite{LuckLTwoInvariants, GriPittetNoEigenv} for further context) 
and when $\Gamma = \Gamma(G,S)$ is a Cayley graph, $\nu(B) = \langle E(B) \delta_1, \delta_1\rangle$ and is often called the \textbf{spectral measure} of $(G,S)$.

The notation $\nu(\cdot)$ for the density of states measure is mostly standard, although the notation $dk(\cdot)$ is also used sometimes (e.g. see \cite{Higuchi:Nomura:Paper,GriPittetNoEigenv}).
Another standard notation is that of the cumulative distribution function of $\nu(\cdot)$, called the \textbf{integrated density of states}:
$N(\lambda):=\nu((-\infty, \lambda)), \lambda\in\mathbb{R}$.

Denote by $D(\Gamma)$ all $\mathbb{C}$-valued functions on the vertices $V$ of $\Gamma$ with finite support
and by $D_{\lambda}(\Gamma)$ all the eigenfunctions of $\lambda$ in $D(\Gamma)$.
The following two theorems are due to Kuchment \cite{Kuch} for the case when $G$ is abelian.
In \cite{Higuchi:Nomura:Paper}, Veseli\'{c} generalized the two theorems to the case when $G$ is amenable.
Their proofs are included in Appendix A: the proof of Theorem 2.6 is due to Higuchi and Nomura \cite{Higuchi:Nomura:Paper}, and the proof of Theorem 2.7 is a modification of this proof.
All the proofs in the amenable case use an argument of Delyon and Souillard  \cite{DelyonSouillardOriginalIdea}.

\begin{theorem}[Strong Localization of Eigenfunctions, Kuchment-Veseli\'{c}]
	Let $\Gamma$ be a $G$-periodic graph with amenable group $G$ and let $\Delta$ be the Laplacian operator on it. 
	If $\lambda$ is an eigenvalue of $\Delta$, then there exists an eigenfunction of $\lambda$ which has finite support, i.e. $D_{\lambda}(\Gamma) \neq \{0\}$.
\end{theorem}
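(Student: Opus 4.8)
The plan is to transfer the problem into the group algebra of the acting group. Fixing a fundamental domain $W$ identifies $l^2(V)$ with $l^2(G)^{|W|}$ as a representation of $G$ via the Koopman representation, and under this identification $\Delta-\lambda I$ becomes an operator that commutes with the $G$-action and has finite propagation: it is given by a matrix over $\cx G$ whose entries are supported in a ball of radius $\rho$ about $1\in G$ for some $\rho$ depending only on $\Gamma$ (for the Laplacian one may take $\rho=1$). Now the finitely supported eigenfunctions of $\lambda$ are precisely the nonzero elements of the \emph{algebraic} kernel of $\Delta-\lambda$ acting on the finitely supported functions on $V$, so the theorem reduces to the following: if the $l^2$-eigenspace $E_\lambda=\ker(\Delta-\lambda\mid l^2(V))$ is nonzero, then this algebraic kernel is nonzero.

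I would prove the reduced statement by contradiction, so suppose $\Delta-\lambda$ is injective on finitely supported functions. By the Thick Følner Sequences lemma fix a standard $\rho$-thick Følner sequence $F_j=\mathcal F_j\cdot W$ (so $\{\mathcal F_j\}$ is a Følner sequence for $G$ and $|\partial_\rho F_j|/|F_j|\to 0$), and consider the finite compressions $A_j:=I_{F_j}(\Delta-\lambda)I_{F_j}$ on $l^2(F_j)$. If $h\in\ker A_j$ then $h$ is supported in $F_j$, so $(\Delta-\lambda)h$ is supported in $F_j\cup\partial_\rho F_j$; but $A_jh=0$ forces $(\Delta-\lambda)h$ to vanish on $F_j$, hence it is supported in $\partial_\rho F_j$. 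Since $\Delta-\lambda$ is injective on finitely supported functions, $h\mapsto(\Delta-\lambda)h$ embeds $\ker A_j$ into $l^2(\partial_\rho F_j)$, whence $\dim\ker A_j\le|\partial_\rho F_j|$ and so $\dim\ker A_j/|F_j|\to 0$.

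On the other hand, under $l^2(F_j)\cong l^2(\mathcal F_j)^{|W|}$ the compressions $A_j$ are exactly the finite-dimensional models appearing in Lück's approximation theorem for amenable groups, applied to the matrix $\Delta-\lambda$ over $\cx G$ and the Følner sequence $\{\mathcal F_j\}$; that theorem gives $\dim\ker A_j/|F_j|\to dk(\{\lambda\})$, the right-hand side being, up to the normalisation by $|W|$, the von Neumann dimension of $E_\lambda$. Since $\lambda$ is an eigenvalue, $dk(\{\lambda\})>0$ (the point masses of $dk$ occur precisely at the eigenvalues, as recalled above), so the limit is strictly positive, contradicting the previous paragraph. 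Therefore the algebraic kernel is nonzero: $\lambda$ has a finitely supported eigenfunction.

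The entire weight of the proof is in the inequality $\liminf_j\dim\ker A_j/|F_j|\ge dk(\{\lambda\})$ — that a von Neumann-nontrivial eigenspace is genuinely seen by the finite compressions — which is the amenable case of Lück's theorem, and this is where amenability enters essentially: the elementary input, weak-$*$ convergence of the finite eigenvalue-counting measures to $dk$ via traces of powers, only yields the opposite and useless inequality $\limsup_j\dim\ker A_j/|F_j|\le dk(\{\lambda\})$. If one prefers a self-contained argument — essentially the Delyon--Souillard and Higuchi--Nomura route — one produces this lower bound by hand: translates $\pi_g f$ of a fixed nonzero $l^2$-eigenfunction $f$, truncated to the Følner sets, furnish many almost-orthogonal approximate eigenfunctions of $A_j$, which are then corrected to exact kernel elements, using the remark made just before the Thick Følner Sequences lemma that an eigenfunction vanishing on a $2$-thick boundary layer of $F$ restricts to an eigenfunction supported on $F$. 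I expect this correction step — the passage from approximate to exact — to be the main obstacle, and it is exactly the point that forces one to use the amenability of $G$.
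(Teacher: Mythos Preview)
Your argument is correct but takes a genuinely different route from the paper. You invoke L\"uck's approximation theorem for amenable groups as a black box to get $\dim\ker A_j/|F_j|\to dk(\{\lambda\})$, and then the elementary propagation bound $\dim\ker A_j\le|\partial_\rho F_j|$ (valid under the contradiction hypothesis that $\Delta-\lambda$ is injective on finitely supported functions) forces $dk(\{\lambda\})=0$. The paper instead gives the self-contained Delyon--Souillard/Higuchi--Nomura argument you sketch in your last paragraph, and it is worth seeing how it actually runs, since it is rather more direct than a ``correction from approximate to exact'' step: one first proves $dk(\{\lambda\})>0$ by hand from a single $l^2$-eigenfunction, then notes that $m_j:=\dim I_{F_j\cup\partial_2 F_j}E(\{\lambda\})l^2(V)$ dominates $tr(I_{F_j\cup\partial_2 F_j}E(\{\lambda\}))=|F_j\cup\partial_2 F_j|\,dk(\{\lambda\})$, so $m_j>|\partial_2 F_j|$ for some $j$; a pigeonhole then produces a nontrivial combination of an orthonormal basis of $I_{F_j\cup\partial_2 F_j}E_\lambda$ vanishing on $\partial_2 F_j$, which truncates to a finitely supported eigenfunction. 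No compressions $A_j$ or approximate eigenfunctions appear.

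What each approach buys: your route is conceptually clean and isolates exactly where the depth lies (the L\"uck-type approximation), but it imports a nontrivial theorem whose proof is itself comparable in difficulty to the statement at hand. The paper's approach is elementary and self-contained --- it uses nothing beyond the spectral theorem, positivity of the trace, and the F\o lner condition --- and in particular it reproves the needed fragment of L\"uck's theorem (namely $\liminf_j m_j/|F_j|\ge dk(\{\lambda\})$) along the way rather than citing it.
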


\begin{theorem}[Finite Support Approximation of Eigenfunctions, Kuchment-Veseli\'{c}]
	Let $\Gamma$ be an $G$-periodic graph with amenable $G$ and let $\Delta$ be the Laplacian operator on it with eigenvalue $\lambda$. 
	If $f \in l^2(V)$ is an eigenfunction of $\lambda$,
	then for all $\epsilon>0$ arbitrarily small, there exists $g \in D_{\lambda}(\Gamma)$ such that $||f-g||<\epsilon$,
	i.e. the finite support eigenfunctions of $\lambda$ are $l^2$-dense in the $l^2$-eigenspace of $\lambda$. 
\end{theorem}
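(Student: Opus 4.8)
The plan is to argue by contradiction, in the spirit of Delyon and Souillard, re-using almost verbatim the machinery of the Strong Localization Property. Write $H := \overline{\operatorname{span} D_{\lambda}(\Gamma)} \subseteq E_{\lambda}$. Since $\Delta$ commutes with the Koopman representation and each $\pi_g$ permutes $V$ (so preserves finite support), $D_{\lambda}(\Gamma)$ is $G$-invariant, hence so is the closed subspace $H$; it therefore suffices to prove $H = E_{\lambda}$. Suppose not, and let $N := E_{\lambda} \ominus H \neq \{0\}$, with orthogonal projection $P \colon l^2(V) \twoheadrightarrow N$. Then $P$ commutes with all $\pi_g$, $\operatorname{ran} P$ consists of $\lambda$-eigenfunctions, and — the point we exploit at the end — $N$ contains no nonzero finitely supported function, since such a function would be a finitely supported $\lambda$-eigenfunction and hence lie in $D_{\lambda}(\Gamma) \subseteq H$.

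Next I would run the counting argument from the proof of the Strong Localization Property with $P$ in place of $E(\{\lambda\})$. Exactly as there, $c := \frac{1}{|W|}\operatorname{tr}(P I_W) > 0$: picking $0 \neq \psi \in N$ and translating it (which keeps it in $N$) so that $\psi(w) \neq 0$ for some $w \in W$, one gets $\operatorname{tr}(P I_W) = \sum_{v \in W}\|P\delta_v\|^2 \geq \|P\delta_w\|^2 > 0$, and $G$-invariance of $P$ gives $\operatorname{tr}(P I_{\mathcal{F}\cdot W}) = c\,|\mathcal{F}\cdot W|$ for every finite $\mathcal{F} \subseteq G$. Fix a standard $2$-thick Følner sequence $F_j = \mathcal{F}_j \cdot W$, put $S_j := F_j \cup \partial_2 F_j$ and $m_j := \dim_{\cx} I_{S_j} N$. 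The operator $I_{S_j} P I_{S_j} = (P I_{S_j})^* (P I_{S_j})$ is a positive contraction of rank at most $m_j$ (its range lies in $I_{S_j}N$), so $m_j \geq \operatorname{tr}(I_{S_j}P I_{S_j}) = \operatorname{tr}(P I_{S_j}) \geq \operatorname{tr}(P I_{F_j}) = c\,|F_j|$; since $|\partial_2 F_j|/|F_j| \to 0$, for all large $j$ we have $|\partial_2 F_j| < m_j$.

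Fix such a $j$. Then the restriction map $I_{S_j}N \to l^2(\partial_2 F_j)$, $u \mapsto u|_{\partial_2 F_j}$, has a nonzero kernel, so there is $0 \neq h \in I_{S_j}N$ with $h \equiv 0$ on $\partial_2 F_j$. Writing $h = I_{S_j}\psi$ with $\psi \in N$: since $\partial_2 F_j \subseteq S_j$ we get $\psi \equiv 0$ on $\partial_2 F_j$, and since $h$ is supported in $S_j = F_j \sqcup \partial_2 F_j$ and vanishes on $\partial_2 F_j$ we in fact have $h = I_{F_j}\psi$. By the preliminary observation that an eigenfunction vanishing on $\partial_2 F$ restricts to an eigenfunction on $F$, $h$ is a finitely supported $\lambda$-eigenfunction, so $h \in D_{\lambda}(\Gamma) \subseteq H$. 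On the other hand $\langle \psi, h\rangle = \langle \psi, I_{F_j}\psi\rangle = \|I_{F_j}\psi\|^2 = \|h\|^2 > 0$, whereas $\psi \in N \perp H \ni h$ forces $\langle \psi, h\rangle = 0$. This contradiction shows $N = \{0\}$, i.e. $H = E_{\lambda}$, which is the claim.

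All of the analytic bookkeeping here (the trace identities, positivity of $c$, and the fact that an eigenfunction vanishing on a $2$-thick boundary restricts to an eigenfunction) is routine and identical to what is used for the Strong Localization Property; note in particular that the thickness $2$ is essential, since $\Delta$ has range $1$ and it is precisely the vanishing of $\psi$ on $\partial_2 F_j$ that kills the error term of $\Delta(I_{F_j}\psi)$ on $\partial_1 F_j$, so the standard $2$-thick Følner sequence of the Thick Følner Lemma is a necessity, not a convenience. The one genuinely new step — and what I expect to be the only subtle point — is the final observation: a finitely supported eigenfunction obtained as a literal truncation $I_{F_j}\psi$ of some $\psi \in N$ cannot be orthogonal to that same $\psi$, hence cannot lie in $N^{\perp}$; thus it is not that the newly produced eigenfunctions lie in $N$ (they need not), but that their mere presence inside truncations of $N$ is already incompatible with $N$ being orthogonal to every finitely supported eigenfunction. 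This is what upgrades the existence statement of the Strong Localization Property to the density statement.
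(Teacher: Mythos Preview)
Your proof is correct and follows essentially the same Delyon--Souillard contradiction argument as the paper: define the orthogonal complement $N$ of the closure of $D_\lambda(\Gamma)$ inside $E_\lambda$, show its ``density'' $c = \tfrac{1}{|W|}\operatorname{tr}(P I_W)$ is positive and $G$-invariant, use a $2$-thick F{\o}lner sequence to force $\dim I_{S_j}N > |\partial_2 F_j|$, extract a truncation vanishing on $\partial_2 F_j$, and derive the orthogonality contradiction. Your write-up is in fact slightly cleaner in two places (the rank bound $m_j \geq \operatorname{tr}(I_{S_j}P I_{S_j})$ via positivity, and taking a single preimage $\psi \in N$ of $h$ rather than lifting a whole basis), but these are cosmetic; the method is the same.
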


\section{Abelian Periodic Graphs}

\textbf{For this section}, let $\Gamma = (V,E)$ be a $\z^d$-periodic graph with fundamental domain $W$.

The \textbf{Floquet-Bloch transform} of $f\in l^2(V)$ is a complex valued function with domain $V \times \mathbb{T}^d $ (where $\mathbb{T}^d$ is the $d$-dimensional torus)
$$\widehat{f}(v,e^{i k}) := \sum_{g \in \z ^d} f(g \cdot v) e^{-i k \cdot g},$$
where $v\in V$, $k \in \mathbb{R}^d / 2\pi \z^d$, $e^{i k}=(e^{i k_j})_{j=1}^d \in \mathbb{T}^d$ and $k \cdot g$ is the standard dot product. 

One can verify that for all $g \in \z^d, \widehat{f}(g \cdot v,e^{i k}) = e^{i g \cdot k} \widehat{f}(v,e^{i k})$. 
This means that the entire function $\widehat{f}$ may be recovered from its restriction to $W \times \mathbb{T}^d$, hence from now on we will view $\widehat{f}$ as a function on this restricted domain. 
The following key theorem is a consequence of standard techniques from Fourier analysis. For a more detailed exposition, see Chapter 4 \cite{BerkKuchQuantum}.

\begin{proposition}
	a) Inversion Formula: 
	for all $v \in W, g \in \z^d$,
	$$ f(g \cdot v) = \int_{[-\pi,\pi]^d} \widehat{f}(v,e^{i k})e^{i k \cdot g}dk$$
	b) The map 
	$$f \mapsto (2\pi)^{-n/2} (\widehat{f}(v,e^{i k}))_{v \in W}$$
	is a unitary map from $l^2(V)$ to $L^2(\mathbb{T}^d, \cx^{|W|})$, the space of all square summable functions from $\mathbb{T}^d$ to $\cx^{|W|}$.
\end{proposition}

As a result of the above theorem, by composing $\Delta$ with the Floquet-Bloch transform and its inverse, we may transform the discrete Laplacian $\Delta:l^2(\Gamma)\rightarrow l^2(\Gamma)$ to a corresponding self-adjoint map $\widehat{\Delta}: L^2(\mathbb{T}^d,\cx^{|W|}) \rightarrow L^2(\mathbb{T}^d, \cx^{|W|})$.

Let $v_1:=(\delta_{1,i})_{i=1}^d, \dots v_d:=(\delta_{d,i})_{i=1}^d$ be the standard basis of $\z^d$ and write $z_j := e^{i v_j}$.
That way, $e^{i k \cdot g} = z^g$.
By definition, the image of $f \in D(\Gamma)$ under the Floquet-Bloch transform is a vector of size $n=|W|$ whose entries are Laurent polynomials in $(z_j)_j$ (that is, polynomials in $\{z_j\}_j \cup \{z_j^{-1}\}_j$). 
We denote this ring of Laurent polynomials by $\cx[z_1^{\pm 1},z_2^{\pm 1}, \dots ,z_d^{\pm 1}]$, and the vectors of size $n$  with Laurent polynomial entries by $\bigoplus_{k=1}^n \cx[z_1^{\pm 1},\dots,z_d^{\pm 1}]$.
Notice that $\bigoplus_{k=1}^n \cx[z_1^{\pm 1},\dots,z_d^{\pm 1}] \subset L^2(\mathbb{T}^d,\cx^{|W|})$, and since $f \in D(\Gamma) \implies \Delta f \in D(\Gamma)$, we also have:
$$\widehat{\Delta} ( \bigoplus_{k=1}^n \cx[z_1^{\pm 1},\dots,z_d^{\pm 1}] ) \subset \bigoplus_{k=1}^n \cx[z_1^{\pm 1},\dots,z_d^{\pm 1}].$$
The next two propositions allow us to pass questions about finite support eigenfunctions to questions in commutative algebra.

\begin{proposition}
	The map 
	$$\widehat{\Delta}: \bigoplus_{k=1}^n \cx[z_1^{\pm 1},\dots,z_d^{\pm 1}] \to \bigoplus_{k=1}^n \cx[z_1^{\pm 1},\dots,z_d^{\pm 1}]$$
	is a $\cx[z_1^{\pm 1},\dots,z_d^{\pm 1}]$-module homomorphism.
\end{proposition}
\begin{proof}
	To see that $\widehat{\Delta}$ respects multiplication by monomials $z^g$ (where $g\in\z^d$) notice that $\pi_g \Delta = \Delta \pi_g$.
	Under the Floquet-Bloch transform, this equation becomes $z^g \widehat{\Delta} = \widehat{\Delta} z^g$.
	Finally, the linearity of $\widehat{\Delta}$ means that $\widehat{\Delta}$ also respects linear combinations of monomials.
	To be more precise, let $\sum_{g \in \z^d} a_g z^g \in \cx[z_1^{\pm 1},\dots,z_d^{\pm 1}]$. For all $\widehat{f}\in \bigoplus_{k=1}^n \cx[z_1^{\pm 1},\dots,z_d^{\pm 1}]$ we have:
	$$\widehat{\Delta} (\sum_{g \in \z^d} a_g z^g ) \widehat{f} 
	= \widehat{\Delta} \sum_{g \in \z^d} a_g (z^g \widehat{f})
	= \sum_{g \in \z^d} a_g \widehat{\Delta}(z^g \widehat{f})
	= \sum_{g \in \z^d} a_g z^g \widehat{\Delta} \widehat{f},$$
	therefore we get a $\cx[z_1^{\pm 1},\dots,z_d^{\pm 1}]$-module homomorphism.
\end{proof}

A well-known consequence of the above proposition is that we may express $\widehat{\Delta}$ as a $|W|\times |W|$ matrix whose entries are rational functions on $\mathbb{T}^d \subset \cx^d$ expressed via Laurent polynomials.
Classical Floquet-Bloch theory shows that $\lambda$ is an eigenvalue if and only if $det(\widehat{\Delta}-\lambda I)$ is the zero function on $\mathbb{T}^d $.
Moreover, $\lambda$ lies in the spectrum $sp(\Delta)$ if and only if
$det(\widehat{\Delta}-\lambda I)$ has a zero.
For more details see Chapter 4 in \cite{BerkKuchQuantum}.
The following proposition is due to Kuchment \cite{Kuch}.

\begin{proposition}[Kuchment]
	A $\z ^d$-periodic graph $\Gamma$ with Laplacian $\Delta$ has an eigenvalue $\lambda$ if and only if the map
	$$(\widehat{\Delta} - \lambda I_n) : \bigoplus_{k=1}^n \cx[z_1^{\pm 1},\dotsc,z_d^{\pm 1}] \rightarrow \bigoplus_{k=1}^n \cx[z_1^{\pm 1},\dotsc,z_d^{\pm 1}]$$ is \textbf{not} injective.
	The kernel of this map corresponds to the finite support eigenfunctions of $\lambda$. 
\end{proposition}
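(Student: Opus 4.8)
The plan is to push everything through the Floquet--Bloch transform and then lean on the two facts already in hand: the Strong Localization Property (Theorem 2.2), which applies because $\z^d$ is amenable, so that a periodic graph is an amenable quasi-homogeneous graph, and the structural description of $\hat\Delta$ in Proposition 3.2. The first step I would carry out is to record carefully that the Floquet--Bloch transform restricts to a \emph{bijection} between $D(\Gamma)$ and the module $\bigoplus_{k=1}^n\cx[z_1^{\pm1},\dots,z_d^{\pm1}]$: a finitely supported $f$ has $\hat f(w_i,\cdot)=\sum_{g\in\z^d}f(g\cdot w_i)z^{-g}$, a finite sum and hence a Laurent polynomial in each of the $n=|W|$ coordinates, while conversely the Laurent coefficients of an arbitrary vector of Laurent polynomials prescribe the values of a unique finitely supported function on each $\z^d$-orbit. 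This is just the identification $\cx[\z^d]\cong\cx[z_1^{\pm1},\dots,z_d^{\pm1}]$ applied coordinatewise over $W$. Since the transform is unitary (Theorem 3.1(b)) and conjugates $\Delta$ to $\hat\Delta$ by construction, for $f\in D(\Gamma)$ one has $\Delta f=\lambda f$ if and only if $(\hat\Delta-\lambda I_n)\hat f=0$ as an element of the module --- equivalently, as an $L^2$ function on $\mathbb{T}^d$, since the module sits inside $L^2(\mathbb{T}^d,\cx^{|W|})$ and Proposition 3.2 guarantees $\hat\Delta$ preserves it. This step already pins down the asserted correspondence between $\ker(\hat\Delta-\lambda I_n)$ and $D_\lambda(\Gamma)$.

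With that in place, the equivalence ``$\lambda$ is an eigenvalue $\iff$ the kernel is nonzero'' is short. For the backward direction I would take a nonzero $p\in\ker(\hat\Delta-\lambda I_n)$; by the bijection it is $\hat g$ for a unique nonzero $g\in D(\Gamma)$ with $\Delta g=\lambda g$, and a finitely supported function lies in $l^2(V)$, so $\lambda$ is an eigenvalue. For the forward direction, if $\lambda$ is an eigenvalue then Theorem 2.2 hands us a nonzero $g\in D_\lambda(\Gamma)$, whose transform $\hat g$ is a nonzero element of $\bigoplus_{k=1}^n\cx[z_1^{\pm1},\dots,z_d^{\pm1}]$ annihilated by $\hat\Delta-\lambda I_n$.

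It is worth noting that the forward direction can instead be obtained purely algebraically, which in effect re-derives strong localization in the periodic case: we were told that $\lambda$ is an eigenvalue iff $\det(\hat\Delta-\lambda I_n)$ vanishes identically on $\mathbb{T}^d$, and a Laurent polynomial vanishing on the torus is the zero element of $\cx[z_1^{\pm1},\dots,z_d^{\pm1}]$; passing to the fraction field $\cx(z_1,\dots,z_d)$, the matrix $\hat\Delta-\lambda I_n$ is then singular, hence has a nonzero kernel vector over $\cx(z_1,\dots,z_d)$, and multiplying by a common denominator of its entries produces a nonzero kernel vector over $\cx[z_1^{\pm1},\dots,z_d^{\pm1}]$; conversely a nonzero kernel vector over the Laurent ring forces $\det(\hat\Delta-\lambda I_n)=0$ in the integral domain $\cx[z_1^{\pm1},\dots,z_d^{\pm1}]$.

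I expect the only genuinely fiddly point to be the bookkeeping in the first paragraph: making the bijection $D(\Gamma)\leftrightarrow\bigoplus_{k=1}^n\cx[z_1^{\pm1},\dots,z_d^{\pm1}]$ explicit, and checking that it is compatible with Proposition 3.2 so that the pointwise identity ``$\hat\Delta\hat f=\lambda\hat f$ on $\mathbb{T}^d$'' is literally the module-theoretic identity ``$(\hat\Delta-\lambda I_n)\hat f=0$''. Once that compatibility is nailed down, both directions are immediate from Theorems 2.2 and 3.1.
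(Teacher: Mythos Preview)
Your proof is correct and follows essentially the same route as the paper: the paper invokes the Strong Localization Property (Theorem 2.2) for the forward direction and the Floquet--Bloch correspondence for the reverse and for identifying the kernel with $D_\lambda(\Gamma)$, exactly as you do, only much more tersely. Your additional determinant/fraction-field argument is a genuine bonus not in the paper; it gives a self-contained algebraic proof of the forward direction that bypasses the analytic Theorem 2.2 in the periodic case.
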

\begin{proof}
	By the Floquet-Bloch transform, for all $f\in D(\Gamma)$: $(\widehat{\Delta}-\lambda I_n)\widehat{f} = 0 \iff (\Delta - \lambda I) f = 0$.
	By Theorem 2.6, $(\Delta - \lambda I) f = 0$ has a solution in $D(\Gamma)$ if and only if $\lambda$ is an eigenvalue of $\Delta$.
	Combining these two observations, the proposition follows.
\end{proof}
In view of the formula 
$$\widehat{f}(g \cdot v,e^{i k}) = e^{i g \cdot k} \widehat{f}(v,e^{i k}) = z_1^{g_1} \dotsc z_d^{g_d} \widehat{f}(v,e^{i k}) = z^g \widehat{f}(v,e^{i k}),$$
we see that when we multiply each component of $\widehat{f}$ by the same monomial $z^g \in \cx[z_1^{\pm 1},\dotsc,z_d^{\pm 1}]$ we are essentially translating $f$ by $g \in \z^d$. 
It follows that when we multiply each component of $\widehat{f}$ by an arbitrary element of $\cx[z_1^{\pm 1},\dotsc,z_d^{\pm 1}]$, then we are taking linear combinations of translations of $\widehat{f}$. 
Notice that $(\widehat{\Delta} - \lambda I_n)\widehat{f} = 0 \iff (\widehat{\Delta} - \lambda I_n) z^g \widehat{f} = 0$ hence translations of eigenfunctions are still eigenfunctions with respect to the same eigenvalue.
Therefore, if we wish to describe all finite support eigenfunctions of $\lambda$, it suffices to find  them up to translations by $\z^d$.

The following proposition is due to Kuchment \cite{Kuch}, but the proof presented here is new:

\begin{proposition}[Kuchment]
	Let $\Gamma$ be a $\z^d$-periodic graph with discrete Laplacian $\Delta$ on it and let $\lambda$ be an eigenvalue of $\Delta$.
	Then $\lambda$ has finitely many finite support eigenfunctions up to translation and linear combinations. That is, there are finite support eigenfunctions of $\lambda$, $f^{(1)}, \dotsc f^{(r)}$ such that every eigenfunction of $\lambda$, $f$ with finite support is the finite linear combination of translations of $f^{(1)}, \dotsc f^{(r)}$.
\end{proposition}

\begin{proof}
	Suppose that we had, a priori, a finite support eigenfunction $f \in D_{\lambda}(\Gamma)$. Then we may translate it such that, without loss of generality, $f$ has support in $\cup_{g \in \n^d} \;\text{-} g \cdot W$. This means that $\widehat{f}\in \cx[z_1,\dotsc,z_d]$. Next consider the entries of $(\widehat{\Delta} - \lambda I_n)$ (which are elements of the ring $\cx[z_1^{\pm 1},\dotsc,z_d^{\pm 1}]$), look at all the integer powers of $z_1,\dotsc,z_d$ in the terms of the entries and pick the smallest negative power $-P$ (set $P=0$ if all the powers are non-negative). That way, the entries of $(z_1,\dotsc,z_d)^P(\widehat{\Delta} - \lambda I_n)$ all lie in $\cx[z_1,\dotsc,z_d]$. We conclude that the set of all eigenfunctions of $\lambda$ whose support is finite and lies in $\cup_{g \in \n^d} \;\text{-} g \cdot W$ is the kernel of the $\cx[z_1,\dotsc,z_d]$-linear map:
	$$(z_1,\dotsc,z_d)^P(\widehat{\Delta} - \lambda I_n): \bigoplus_{k=1}^n \cx[z_1,\dotsc,z_d] \rightarrow \bigoplus_{k=1}^n \cx[z_1,\dotsc,z_d].$$
	
	By the classical Hilbert Basis Theorem, every ideal of $\cx[z_1,\dotsc,z_d]$ is finitely generated, i.e. $\cx[z_1,\dotsc,z_d]$ is Noetherian. 
	Every finitely generated module over a Noetherian ring is a Noetherian module. 
	The kernel of $(z_1,\dotsc,z_d)^P(\widehat{\Delta} - \lambda I_n)$ is certainly a submodule of the finitely generated $\cx[z_1,\dotsc,z_d]$-module $\bigoplus_{k=1}^n \cx[z_1,\dotsc,z_d]$, so it is finitely generated, say by generators $f^{(1)}, \dotsc f^{(r)}$. But what does this mean? For every eigenfunction with Floquet-Bloch transform $f$ there are $g_1, g_2, \dotsc g_r \in \cx[z_1,\dotsc,z_d]$ such that $f = g_1 f^{(1)} +  g_2 f^{(2)} + \dotsc + g_r f^{(r)}$. Breaking down $g_1, g_2, \dotsc, g_r$ into linear combinations of monomials, and noting that multiplication by $z^g$ corresponds to translation in $\Gamma$ by $g$, we see that $f$ is the linear combination of translations of $f^{(1)}, \dotsc, f^{(r)}$, and the claim follows.
\end{proof}

\section{Amenable Periodic Graphs with Noetherian Group Algebra}

In this section we generalize Proposition 3.4.
Recall that the Hilbert basis theorem (polynomial rings are Noetherian) was the key ingredient in proving Proposition 3.4.\\
\textbf{Throughout this section} $\Gamma$ is a $G$-periodic graph, where $G$ is amenable.

\begin{definition}[Noncommutative Floquet-Bloch transform]
	Associate to each $f \in D(\Gamma)$ a function $\widehat{f}:V \to \cx[G]$
	sending $v \in V$ to $$\sum_{g \in G} f(g^{-1} \cdot v) g \in \cx[G].$$
	Since $f$ has finite support, the sum is finite and the map is well defined.
	Notice that 
	$$for\;all\;\; h \in G, \;\; \widehat{(\pi_h f)}(v) = \widehat{f}(h^{-1} \cdot v) = h^{-1} \widehat{f}(v).$$
	Fix a fundamental domain $W$.
	In view of the above identity, we can recover $\widehat{f}:V \to \cx[G]$ from $\widehat{f}|_W$, i.e. the vector
	$(\widehat{f}(w))_{w\in W}  = (\sum_{g \in G} f(g \cdot w) g)_{w\in W} \in \bigoplus_{k=1}^n \cx[G]$, where $n:=|W|$.
\end{definition}

It is easy to see that $\widehat{\cdot}: D(\Gamma) \to \bigoplus_{k=1}^n \cx[G]$ is a bijective $\cx$-linear map.
To the operator $\Delta: D(\Gamma) \to D(\Gamma)$ corresponds some other operator
$\widehat{\Delta}: \bigoplus_{k=1}^n \cx[G] \to \bigoplus_{k=1}^n \cx[G]$.

\begin{proposition}
	The map 
	$$\widehat{\Delta}: \bigoplus_{k=1}^n \cx[G] \to \bigoplus_{k=1}^n \cx[G]$$
	is a left $\cx[G]$-module homomorphism.
\end{proposition}
\begin{proof}
	To see that $\widehat{\Delta}$ respects left multiplication by terms $g \in \cx[G]$ (where $g\in G$) notice that $\pi_g \Delta = \Delta \pi_g$.
	Under the Noncommutative Floquet-Bloch transform, this equation becomes $g \widehat{\Delta} = \widehat{\Delta} g$.
	For all $\sum_{g \in G} a_g g \in \cx[G]$ and $\widehat{f}\in \bigoplus_{k=1}^n \cx[G]$ we have:
	$$\widehat{\Delta} (\sum_{g \in G} a_g g ) \widehat{f} 
	= \widehat{\Delta} \sum_{g \in G} a_g (g \widehat{f})
	= \sum_{g \in G} a_g \widehat{\Delta}(g \widehat{f})
	= \sum_{g \in G} a_g g \widehat{\Delta} \widehat{f},$$
	therefore we get a $\cx[G]$-module homomorphism.
\end{proof}

Recall that a ring $R$ is \textbf{Noetherian} whenever every submodule of a finitely generated $R$-module is finitely generated.

\begin{proposition}
	Let $\Gamma$ be an $G$-periodic graph with amenable group $G$, $\Delta$ be the discrete Laplacian on $\Gamma$ and $\lambda$ an eigenvalue of $\Delta$.\\
	\textbf{If} the group algebra $\cx[G]$ is Noetherian (in particular if G is virtually polycyclic),\\
	\textbf{then} there are finitely many finite support eigenfunctions $f^{(1)},\dotsc, f^{(r)} \in D_\lambda(\Gamma)$
	such that every eigenfunction of $\lambda$ with finite support on $\Gamma$ is the finite linear combination
	of translations of $f^{(1)},\dotsc, f^{(r)}$
\end{proposition}

\begin{proof}
	Note that $f \in D_{\lambda}(\Gamma)$ if and only if $(\Delta - \lambda I)f = 0$
	if and only if $(\widehat{\Delta} - \lambda I) \widehat{f} = 0$ if and only if $f \in kernel(\widehat{\Delta} - \lambda I)$.
	This kernel is a submodule of $\bigoplus_{k=1}^n \cx[G]$.
	Since $\cx [G]$ is Noetherian,
	$kernel(\widehat{\Delta} - \lambda I)$ is finitely generated.
	Say, $kernel(\widehat{\Delta} - \lambda I) = \langle \widehat{f}^{(1)},\dotsc, \widehat{f}^{(r)}\rangle$.
	Then for all $f \in D_{\lambda}(\Gamma)$, there exist $h_1, \dotsc, h_r \in \cx[G]$ such that
	$ \widehat{f} = h_1 \widehat{f}^{(1)} + \dotsc + h_r \widehat{f}^{(r)}$. 
	Write $h_1 = \sum_{g \in G} a_g g$ so that: 
	$$h_1 \widehat{f}^{(1)} = (\sum_{g \in G} a_g g) \widehat{f}^{(1)} 
	= \sum_{g \in G} a_g (g \widehat{f}^{(1)}) = \sum_{g \in G} a_g (\widehat{\pi_g^{-1} f^{(1)}}).$$
	Hence to $h_1 \widehat{f}^{(1)}$ corresponds a function which is the finite linear combination of translations of $f^{(1)}$, and the theorem follows. 
\end{proof}

The theorem of Hall \cite{PHallPolycyclic} states that the group algebra of a finite extension of a polycyclic group is Noetherian (note that polycyclic groups are solvable and amenable, and they include all nilpotent groups \cite{GGTDrutuKapovich}).

\section{Finite Support Approximation of the Density of an Eigenvalue}

The following lemma allows us to study the density of an eigenvalue via its finite support eigenfunctions. 
Essentially, it connects the definition of density of states via a trace formula to the intuitive definition of density of states as the number of eigenfunctions per unit volume. 
The formula goes back to the work of Pastur \cite{Pastur, PasturFigotin} and Shubin \cite{Shubin, ShubinMagnetic}.

\begin{lemma}
	Let $\Gamma$ be a $G$-periodic graph with amenable $G$, discrete Laplacian $\Delta$ and let $\lambda$ be an eigenvalue of $\Delta$.
	\\\textbf{If} $\widehat{D_{\lambda}(\Gamma)} := kernel(\widehat{\Delta}-\lambda I)$ is a finitely generated $\cx G$-module,
	\\\textbf{then} for any fundamental domain $W$ and any generating set $S$ of $G$ there exists $j_0 \in \n$ such that 
	any standard $j_0$-thick Følner sequence $\{F_j\}$ of $\Gamma$ satisfies the following formula:
	$$ \nu(\{\lambda\}) = lim_{j \to \infty} \dfrac{dim_{\cx}\{f \in D_{\lambda}(\Gamma) : supp(f) \subset F_j\}}{|F_j|}.$$
\end{lemma}

The lemma follows from earlier work of Lenz and Veseli\'{c}, in particular Theorem 2.4 in \cite{LenzVeselicUniformApproximationAndExamples} (by estimating $lim_{\epsilon\to 0^{+}} \nu((\lambda-\epsilon,\lambda+\epsilon])$, and interchanging the limits thanks to uniform convergence).
Furthermore, the assumption that $\widehat{D_{\lambda}(\Gamma)}$ is a finitely generated $\cx G$-module is not necessary.
However, this same assumption will hold for the main results in Section 6.
Moreover, by Proposition 4.3 and Hall's theorem \cite{PHallPolycyclic}, this assumption holds for all finite extensions of polycyclic groups.
Finally, under this assumption, the proof of Lemma 5.1 is simple, and is included in Appendix A.

\section{Free Resolution Formula for the Density of an Eigenvalue}

Whenever we have a $G$-periodic graph $\Gamma$ with $G$ finitely generated amenable, we know that the $\cx G$-module of finite support eigenfunctions $K:=kernel(\widehat{\Delta} - \lambda I)$ is nonempty and it's dense in the $l^2$-eigenspace.
The goal of this section is to use the algebraic structure of $K$ to find the density of $\lambda$.
We would like to apply Lemma 5.1 and use $K$ to estimate $dim_{\cx}\{f \in D_{\lambda}(\Gamma) : supp(f) \subset F_j\}$.
The obvious way is to pick a generating set $f^{(1)}, f^{(2)}, \dots, f^{(r)}$ of $K$ and count all $g \in G$ and $1 \leq i \leq r$ such that $supp(\pi_g f^{(i)})\subset F_j$.
The issue is that the set of all those $\pi_g f^{(i)}$ may not be linearly independent, and hence our estimate can be far from optimal.
This motivates us to consider syzygy modules.

Let $R$ be a Noetherian ring.
If $\{f^{(1)}, f^{(2)},\dots, f^{(r)}\}$ is a finite generating set of a finitely generated $R$-module $M$, the \textbf{(first) syzygy module} of of $M$ with respect to the generators $\{f^{(1)}, f^{(2)},\dots f^{(r)}\}$ is the set of all $g = (g_1, \dots, g_r) \in R^r$ such that
$$ g_1 f^{(1)} +  g_2 f^{(2)} + \dots + g_r f^{(r)} = 0,$$
and is denoted by $Syz(M)$. 
Via pointwise multiplication $Syz(M)$ is an $R$-module as well.
Since $R$ is Noetherian, $Syz(M) \subset R^r$ is finitely generated.
Picking a finite set of generators for $Syz(M)$, we can consider its own syzygy module, $Syz(Syz(M))$, abbreviated by $Syz^2(M)$.
By iteration we can define the \textbf{(higher) syzygy module} $Syz^k(M)$ (which is finitely generated) for any positive power $k$ along with the conventions  $Syz^1(M) = Syz(M)$ and $Syz^0(M) = M$.

A alternative way to describe syzygies is through free resolutions. 
Picking a finite generating set $\{f^{(1)}, f^{(2)},\dots, f^{(r_0)}\}$ for $M$ is equivalent to finding a surjection $R^{r_0} \rightarrow M \rightarrow 0$. The kernel of this map is precisely the first syzygy module, so we get the Short Exact Sequence $0 \rightarrow Syz(M) \rightarrow R^{r_0} \rightarrow M \rightarrow 0$. Iterating this proccess we get another Short Exact Sequence $0 \rightarrow Syz^2(M) \rightarrow R^{r_1} \rightarrow Syz(M) \rightarrow 0$ where we choose a generating set of $Syz(M)$ of length $r_1$. We end up with the following sequence of maps:
$$\dots\; 
\twoheadrightarrow Syz^r(M) \hookrightarrow R^{r_3} 
\twoheadrightarrow Syz^3(M) \hookrightarrow R^{r_2} 
\twoheadrightarrow Syz^2(M) \hookrightarrow R^{r_1} 
\twoheadrightarrow Syz(M)   \hookrightarrow R^{r_0} 
\twoheadrightarrow M \rightarrow 0.$$
Via composition we get a \textbf{free resolution} of $M$, that is, a Long Exact Sequence beginning with $\rightarrow R^{r_0} \rightarrow M \rightarrow 0$ and consequently consisting of free R-modules:
$$\dots\;  \rightarrow R^{r_3}  \rightarrow R^{r_2} \rightarrow R^{r_1} \rightarrow R^{r_0} \rightarrow M \rightarrow 0.$$	
Then $Syz^k(M)$ can be recovered as the kernels (or equivalently images) of each map.
Note that when $R$ is \textbf{not} Noetherian, we can still construct syzygy modules, however, we cannot guarrantee that the free modules in the resulting resolution will be finitely generated.

Hilbert's Syzygy Theorem (see \cite{UsingAlgGeom}) asserts that the $d^{th}$ syzygy module $Syz^d(M)$ of a finitely generated $\cx[z_1^{\pm 1},\dots,z_d^{\pm 1}]$-module $M$ is always free. 
This means that if we choose a free generating set for $Syz^d(M)$, then $Syz^{d+1}(M)=0$. As a result the correspoding free resolution will terminate at the $(d+1)^{th}$ step.\\
\textbf{Until we state otherwise, $R = \cx[z_1^{\pm 1},\dotsc,z_d^{\pm 1}]$.}

\begin{theorem}
	Suppose that $\Gamma$ is a $\z^d$-periodic graph with fundamental domain $W$ and $\lambda$ is an eigenvalue of the Laplacian $\Delta$ on $\Gamma$.
	Let $K$ be the kernel of the $R$-module homomorphism
	$$\widehat{\Delta} - \lambda I :  \bigoplus_{k=1}^{W} R \to \bigoplus_{k=1}^{W} R.$$
	Then the following formula about the density of $\{\lambda\}$ holds:
	$$ \nu(\{\lambda\}) = \dfrac{1}{|W|} \sum_{k=0}^{d} (-1)^k r_k,$$
	where $r_0, \dotsc, r_d$ are the ranks of the free modules in a free resolution of $K$
	$$0 \rightarrow R^{r_d} \rightarrow R^{r_{d-1}} \;\dotsc\; \rightarrow R^{r_1} \rightarrow R^{r_0} \rightarrow K \rightarrow 0.$$
\end{theorem}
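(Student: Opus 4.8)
The plan is to combine the Averaged Density Formula (Theorem 5.2) with a ``box Hilbert function'' estimate for the module $K$, and then read the alternating sum of ranks off a free resolution. First I would fix the Følner sequence
$$ F_j := \{\,(-g)\cdot w\ :\ g\in\n^d,\ g_1+\dots+g_d\le j,\ w\in W\,\}, $$
which is readily checked to be a standard $j_0$-thick Følner sequence for every $j_0$ (its thick boundaries have size $O(j^{d-1})$, while $|F_j|=|W|\binom{j+d}{d}$). Thus Theorem 5.2 applies and gives
$$ dk(\{\lambda\})\;=\;lim_{j\to\infty}\ \frac{dim_\cx\{\,f\in D_\lambda(\Gamma)\ :\ sprt(f)\subset F_j\,\}}{|W|\binom{j+d}{d}}\,. $$
By the Floquet--Bloch transform a finite support $\lambda$-eigenfunction supported on $\bigcup_{g\in\n^d}(-g)\cdot W$ has $\hat f\in R^{|W|}$ with $(\hat\Delta-\lambda I_n)\hat f=0$, hence $\hat f\in K$, and $sprt(f)\subset F_j$ translates precisely into each component of $\hat f$ having total degree at most $j$. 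So the numerator equals $H_K(j):=dim_\cx\big(K\cap\bigoplus_{i=1}^{|W|}R_{\le j}\big)$, where $R_{\le j}$ is the space of polynomials of total degree $\le j$, and the claim reduces to showing $lim_j H_K(j)/\binom{j+d}{d}=rank_R(K)$.

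The heart of the matter is the two-sided estimate $H_K(j)=\rho\binom{j+d}{d}+O(j^{d-1})$, where $\rho:=rank_R(K)$. Write $A:=(z_1\cdots z_d)^P(\hat\Delta-\lambda I_n)$, so $K=\ker A$; let $L$ be the image of $A$, so $L\cong R^{|W|}/K$ and, ranks being additive in short exact sequences, $rank_R(L)=|W|-\rho$. For the lower bound I use the elementary fact that a finitely generated module of rank $\rho$ over the domain $R$ contains a free submodule of rank $\rho$ (pick $\rho$ elements of it independent over $Frac(R)$; since $R^{\rho}$ is torsion free they span a copy of $R^{\rho}$): choosing such a submodule, generated in degrees $\le C$, gives $H_K(j)\ge\rho\binom{j-C+d}{d}$. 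For the upper bound, since $dim_\cx A\big(\bigoplus_{i=1}^{|W|}R_{\le j}\big)=|W|\binom{j+d}{d}-H_K(j)$, it suffices to bound that image from below: pick a free submodule $F'\subset L$ of rank $|W|-\rho$ with free basis $c_1,\dots,c_{|W|-\rho}$, write $c_l=A(v_l)$, and put $C':=\max_l deg(v_l)$; then $\sum_l h_l c_l=A\big(\sum_l h_l v_l\big)\in A\big(\bigoplus_{i=1}^{|W|}R_{\le j}\big)$ whenever $deg(h_l)\le j-C'$, and the $R$-independence of the $c_l$ shows this subspace has dimension $(|W|-\rho)\binom{j-C'+d}{d}$. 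Since $\binom{j+d}{d}-\binom{j-C+d}{d}=O(j^{d-1})$, the two bounds give the asymptotic, and hence $dk(\{\lambda\})=\rho/|W|$.

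It remains to match $\rho=rank_R(K)$ with $\sum_{k=0}^d(-1)^k r_k$. By the Hilbert Syzygy Theorem $\cx[z_1,\dots,z_d]$ has global dimension $d$, so the module $M$ (which here is $K$; one could equally take $M$ to be the cokernel of $\hat\Delta-\lambda I_n$, which has the same $R$-rank) admits a finite free resolution $0\to R^{r_d}\to\cdots\to R^{r_0}\to M\to 0$. Applying the exact functor $-\otimes_R Frac(R)$ and using that the alternating sum of dimensions along an exact sequence of finite dimensional vector spaces vanishes yields $rank_R(M)=\sum_{k=0}^d(-1)^k r_k$, which is the asserted formula.

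The step I expect to be the main obstacle is the asymptotic for $H_K(j)$: one must show that the box Hilbert function of the \emph{non-graded} filtered module $K$ has leading coefficient exactly $rank_R(K)$. The naive attempt --- to claim that low-degree elements of $L$ come from low-degree elements of $R^{|W|}$ --- fails because of Castelnuovo--Mumford regularity, which is why the argument above pulls generators of a free submodule of $L$ back through $A$ rather than reasoning with generators of $L$ directly. (One can also avoid Theorem 5.2 altogether: Floquet--Bloch identifies $E_\lambda$ with the kernel of multiplication by $(\hat\Delta-\lambda I_n)(z)$ on $L^2(\mathbb{T}^d,\cx^{|W|})$, whence $dk(\{\lambda\})=\frac{1}{|W|}\int_{\mathbb{T}^d}dim_\cx\ker\big((\hat\Delta-\lambda I_n)(z)\big)\,dz$; since the rank of $(\hat\Delta-\lambda I_n)(z)$ drops only on a proper subvariety of $(\cx\setminus\{0\})^d$ --- a null subset of $\mathbb{T}^d$, which is Zariski dense there --- this integral equals $|W|-rank_{Frac(R)}(\hat\Delta-\lambda I_n)=rank_R(K)$.)
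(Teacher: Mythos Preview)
Your argument is correct, and it takes a genuinely different route from the paper's. The paper uses the max-degree ``box'' filtration $B(K,j)=\{f\in K:\max_i\deg_{z_i}f\le j\}$ and proves a sandwich lemma
\[
(j-j_0)^d r_0-|B(Syz(K),j-j_0)|\;\le\;|B(K,j)|\;\le\;j^d r_0-|B(Syz(K),j)|,
\]
then iterates it through the successive syzygy modules $Syz^k(K)$ until Hilbert's Syzygy Theorem forces $Syz^{d}(K)=0$, reading off $\lim_j|B(K,j)|/j^d=\sum_k(-1)^k r_k$ directly from the recursion. You instead separate the problem into two independent facts: (i) an analytic statement $dk(\{\lambda\})=\tfrac{1}{|W|}\,rank_R(K)$, proved by bounding the total-degree Hilbert function $H_K(j)$ above and below via free submodules of $K$ and of $\operatorname{im}A$; and (ii) the purely algebraic identity $rank_R(K)=\sum_k(-1)^k r_k$, obtained by tensoring the free resolution with $Frac(R)$.

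Each approach has something to recommend it. The paper's recursion makes the role of the syzygies visible at every step and never leaves the realm of honest dimension counts. Your decomposition is cleaner conceptually: the identity $dk(\{\lambda\})=rank_R(K)/|W|$ is the real content, it is manifestly independent of the chosen resolution, and your parenthetical Floquet--Bloch argument (the pointwise kernel has constant dimension $rank_R(K)$ off a null set of $\mathbb T^d$) gives a second, self-contained proof of it that bypasses Theorem 5.2 entirely. Your handling of the upper bound --- pulling generators of a free submodule of $L=\operatorname{im}A$ back through $A$ rather than bounding degrees in $L$ directly --- is exactly the right maneuver to avoid regularity issues, and your remark about this being the delicate point is well taken.
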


We remark that there exist well-known algorithms for the computation of $K$ and its higher syzygy modules, as well as software for these algorithms (see \cite{UsingAlgGeom}).
We will use the following notation for an $R$-submodule $M$ of a free $R$-module $R^n$ ($n \in \n$).
For each monomial $z^l = z_1^{l_1},\dotsc, z_d^{l_d}$ let $|z^l| := |l| = max_{1 \leq i \leq d} |l_i|$.
Next, for all $f_1 \in R$, let $|f_1|$ be the maximum length of the monomials it is comprised of 
and for all $f = (f_1, \dotsc, f_n) \in R^n$ let $|f| := max_{1 \leq k \leq n} |f_k|$. Define
$$ B(M,j) := \{f \in M:\;|f|\leq j\} \;\;\;\;\;\; |B(M,j)| := dim_{\cx}(\{f \in M:\;|f|\leq j\}),$$
which are interpreted as balls in $M$ centered at $0\in M$.
Note that the "length", $|f|$, of $f \in M$ is taken with respect to the free $R$-module $R^n$ that $M$ sits in.
The proof of Theorem 6.1 will rely on the following estimate:

\begin{lemma}
	Let $M$ be a submodule of the free $R$-module $R^n$ ($R=\cx[z_1^{\pm 1},\dotsc,z_d^{\pm 1}]$) with syzygy module
	$$ 0 \rightarrow Syz(M) \rightarrow R^{r} \rightarrow M \rightarrow 0.$$
	Then there exists $j_0$ such that for all $j > j_0$ we have the estimate
	$$ (2(j-j_0)+1)^d r - |B(Syz(M),j-j_0)| \;\;\leq\;\; |B(M,j)| \;\;\leq\;\; (2j+1)^d r - |B(Syz(M),j)|.$$
\end{lemma}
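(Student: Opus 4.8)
The plan is to carry everything through the chosen presentation $\phi\colon R^{r}\twoheadrightarrow M$, the surjection sending the $i$-th standard basis vector of $R^{r}$ to the generator $f^{(i)}$ of $M$; by construction $\ker\phi=Syz(M)$. The one exact computation used throughout is that, restricting $\phi$ to the finite-dimensional $\cx$-subspace $B(R^{r},j)$, the kernel is $Syz(M)\cap B(R^{r},j)=B(Syz(M),j)$ — here one uses that the norm on $Syz(M)$ is, by convention, the one induced from $R^{r}$ — so that
$$\dim_{\cx}\phi(B(R^{r},j))=\dim_{\cx}B(R^{r},j)-|B(Syz(M),j)|=r\,(j+1)^{d}-|B(Syz(M),j)|.$$
The lemma will then follow once $B(M,j)$ is trapped between two such images $\phi(B(R^{r},j-c_{1}))$ and $\phi(B(R^{r},j+c_{2}))$ with $c_{1},c_{2}$ independent of $j$, after which one compares degree-$d$ quantities and absorbs the lower-order slack into $j_0$.

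For the lower bound, put $c:=\max_{i}|f^{(i)}|$. Since the box-norm satisfies $|fg|\le|f|+|g|$, if $|g_{i}|\le j$ for every $i$ then $|\sum_{i}g_{i}f^{(i)}|\le j+c$; hence $\phi(B(R^{r},j-c))\subseteq B(M,j)$ and therefore
$$|B(M,j)|\ \ge\ \dim_{\cx}\phi(B(R^{r},j-c))=r\,(j-c+1)^{d}-|B(Syz(M),j-c)|.$$
For $j_{0}\ge c$ this gives the lower bound in the stated form: the difference between $r\,(j-c+1)^{d}$ and $r\,(j-j_{0})^{d}$ is a polynomial in $j$ of degree $d-1$ with positive leading coefficient, while $|B(Syz(M),j-c)|-|B(Syz(M),j-j_{0})|$ is dominated by $\dim B(R^{r},j-c)-\dim B(R^{r},j-j_{0})$, which is of the same order with strictly smaller coefficient; so the inequality holds for all $j$ beyond a suitable $j_{0}$.

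For the upper bound one needs the reverse inclusion $B(M,j)\subseteq\phi(B(R^{r},j+c_{2}))$, i.e.\ a \emph{uniform} bound, in terms of $|f|$ alone, on the box-degrees of some representation $f=\sum_{i}g_{i}f^{(i)}$ of an element $f\in M$. This is the genuinely algebraic point. The family $\{B(R,j)\}_{j\ge0}$ is a ring filtration, and its Rees algebra $\bigoplus_{j\ge0}B(R,j)\,t^{j}\subseteq R[t]$ is generated as a $\cx$-algebra, in degree one, by the finite set $\{z^{a}t:a\in\{0,1\}^{d}\}$ — because every $l\in\{0,\dots,j\}^{d}$ is a sum of $j$ vectors from $\{0,1\}^{d}$ — and is therefore Noetherian. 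Consequently good filtrations are well behaved: both the subspace filtration $\{B(M,j)\}$ coming from $M\subseteq R^{n}$ and the quotient filtration $\{\phi(B(R^{r},j))\}$ coming from $M\cong R^{r}/Syz(M)$ are good filtrations on the finitely generated $R$-module $M$, hence commensurable, so some constant $c_{2}$ satisfies $B(M,j)\subseteq\phi(B(R^{r},j+c_{2}))$ for all $j$ (concretely, $c_{2}$ can be produced by Buchberger's algorithm adapted to this filtration). This yields
$$|B(M,j)|\ \le\ \dim_{\cx}\phi(B(R^{r},j+c_{2}))=r\,(j+c_{2}+1)^{d}-|B(Syz(M),j+c_{2})|,$$
a two-sided bound of the asserted shape; matching the displayed normalization is then a matter of taking $j_{0}$ sufficiently large.

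I expect the third step — the uniform degree bound $c_{2}$, equivalently the commensurability of the subspace and quotient good filtrations on $M$ — to be the main obstacle; everything else is the exact dimension count for $\phi$ on balls, subadditivity of the box-norm, and comparison of degree-$d$ polynomials in $j$.
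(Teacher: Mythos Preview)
Your lower bound is exactly the paper's: set $j_0=\max_i|f^{(i)}|$, note that $\phi(B(R^r,j-j_0))\subseteq B(M,j)$, and compute the dimension of the image via the short exact sequence with kernel $B(Syz(M),j-j_0)$. (You correctly count $\dim_\cx B(R,j)=(j+1)^d$; the paper writes $j^d$, an inessential off-by-one.)

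The upper bound is where you and the paper diverge. The paper claims that if $f\in B(M,j)$ and $f=\sum_i h_i f^{(i)}$, then simply deleting from each $h_i$ every monomial $z^k$ with $|k|>j$ still yields $f$, hence $B(M,j)\subseteq\phi(B(R^r,j))$. This truncation step is in fact wrong in general: with $d=n=1$, $f^{(1)}=1-z^2$, $f^{(2)}=z$, one has $1\in B(M,0)$ but $1\notin\mathrm{Span}_\cx\{1-z^2,z\}$. Your route via the Rees algebra is the honest fix: the box-degree filtration has Noetherian Rees ring, so the induced and quotient filtrations on $M$ are both good and therefore commensurable, giving $B(M,j)\subseteq\phi(B(R^r,j+c_2))$ for some $c_2$ independent of $j$. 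That is a genuinely more robust argument than the paper's.

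One caveat: your last sentence, that ``matching the displayed normalization is then a matter of taking $j_0$ sufficiently large,'' does not actually recover the upper bound \emph{as written}. The stated inequality $|B(M,j)|\le j^d r-|B(Syz(M),j)|$ carries no $j_0$-shift, and your bound $r(j+c_2+1)^d-|B(Syz(M),j+c_2)|$ exceeds it by at least $r[(j+1)^d-j^d]>0$ for every $j$; no choice of $j_0$ closes that gap. What you have proved is a two-sided estimate of the form
\[
r(j-j_0)^d-|B(Syz(M),j-j_0)|\ \le\ |B(M,j)|\ \le\ r(j+j_0)^d-|B(Syz(M),j+j_0)|,
\]
which is weaker than the lemma as literally stated but is exactly what the proof of Theorem~6.1 uses after dividing by $j^d$ and letting $j\to\infty$. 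Given the counterexample above, the sharp upper bound in the displayed lemma appears not to hold without the shift, so your version is the correct statement.
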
	

\begin{proof}[Proof of Lemma 6.2]
	Fix a generating set $\{f^{(1)},\dots,f^{(r)}\}$ of $M$. Let $j_0 := max_{1 \leq i \leq r}|f^{(i)}|$.
	For all $j > j_0$, there are exactly $(2(j-j_0)+1)^d$ monomials $z^k$ such that $|k|\leq j-j_0$.
	For each such $k$ and for every $1\leq i \leq r$, we have $z^k f^{(i)} \in B(M,j)$.
	
	For the lower estimate, we define:
	$$U_j := Span_{\cx} \{z^k f^{(i)} | 1 \leq i \leq r, |k| \leq j-j_0\}.$$
	If $\{z^k f^{(i)} | 1 \leq i \leq r, |k| \leq j-j_0\}$ is linearly independent over $\cx$, then certainly $r (2(j-j_0)+1)^d \leq dim_{\cx} U_j \leq |B(M,j)|$.
	However, this is not true in general.
	Instead, we have relations of the form $\sum_{\alpha} c_{\alpha} z^{k_\alpha} f^{(i_{\alpha})} = 0$ where $|k_\alpha|\leq j-j_0$, and the $c_{\alpha}$ are scalars in $\cx$.
	By definition, these relations are in $1$-$1$ correspondence with the syzygies $(h_1, \dots, h_r) \in Syz(M)$ where $|h_i|\leq j-j_0$ for all $i$.
	This correspondence is easily seen to be a linear map and hence we get a Short Exact Sequence of $\cx$-vector spaces:
	$$ 0 \rightarrow B(Syz(M),j-j_0) \rightarrow B(R^{r},j-j_0) \rightarrow U_j \rightarrow 0 \;\;\; \implies $$
	$$ (2(j-j_0)+1)^d r \;-\; |B(Syz(M),j-j_0)| \;=\; dim_{\cx} B(R^{r},j-j_0) \;-\; dim_{\cx}B(Syz(M),j-j_0) $$
	$$\;=\; dim_{\cx} U_j \;\leq\; |B(M,j)|,$$
	where the second equality follows from the rank-nullity theorem.
	
	For the upper estimate, suppose that $f \in B(M,j)$.
	Since $M = \langle f^{(1)},\dots,f^{(r)} \rangle$, there exist $h_1, \dots, h_r$ s.t. $f = h_1 f^{(1)} + \dotsc + h_r f^{(r)}$.
	But $f$ consists of entries with monomials $z^k$ s.t. $|k| \leq j$ hence we may remove any monomials $z^k$ from $h_1, \dots, h_r$ with $|k| > j$ and we still get $f = h_1 f^{(1)} + \dots + h_r f^{(r)}$.
	This shows that 
	$$B(M,j) \subset W_j := Span_{\cx} \{z^k f^{(i)} | 1 \leq i \leq r, |k| \leq j\}.$$
	In the exact same manner as with the first inequality, we get a Short Exact Sequence of $\cx$-vector spaces:
	$$ 0 \rightarrow B(Syz(M),j) \rightarrow B(R^{r},j) \rightarrow W_j \rightarrow 0.$$
	Therefore, the second inequality follows by taking dimensions:
	$$ |B(M,j)| \;\leq\; dim_{\cx} U_j \;=\; dim_{\cx} B(R^{r},j) - dim_{\cx}B(Syz(M),j) \;=\; (2j+1)^d r - |B(Syz(M),j)|,$$
	and the proof of Lemma 6.2 is complete.
\end{proof}

\begin{proof}[Proof of Theorem 6.1]
	Using the lemma for each $Syz^i(K)$ for all $0\leq i \leq d$, we obtain a $j_0$ value for each. Choose $j_0$ to be the maximum out of all these values and take the following $d j_0$-thick Følner sequence:
	$$ F_j  := \mathcal{F}_j \cdot W,\;\;\; where\;\; \mathcal{F}_j:=\{k \in \z^d : |k| \leq j \}.$$
	By our choice of $F_j$ we have for all $j$:
	$$ |F_j| = |W| (2j -1)^d \;,\;\;\; \widehat{I_{F_j}l^2(V)} = B(R^{|W|},j) \;,\;\;\; dim_{\cx}\{f \in D_{\lambda}(\Gamma) : supp(f) \subset F_j\} = |B(K,j)|.$$
	By Lemma 5.1, 
	$$\nu(\{\lambda\}) = lim_{j \to \infty} \dfrac{dim_{\cx}\{f \in D_{\lambda}(\Gamma) : supp(f) \subset F_j\}}{|F_j|}
	= lim_{j \to \infty} \dfrac{|B(K,j)|}{|W| (2j + 1)^d}.$$
	By Lemma 6.2:
	$$ (2(j-j_0)+1)^d r_0 - |B(Syz(K),j-j_0)| \;\;\leq\;\; |B(K,j)| \;\;\leq\;\; (2j+1)^d r_0 - |B(Syz(K),j)|.$$
	Dividing by $(2j+1)^d$ and letting $j \to \infty$ we get
	$$lim_{j \to \infty} \dfrac{|B(K,j)|}{(2j+1)^d} = r_0 - lim_{j \to \infty} \dfrac{|B(Syz(K),j)|}{(2j+1)^d}.$$
	By induction and since $Syz^{d+1}(K) = 0$,
	$$ lim_{j \to \infty} \dfrac{|B(K,j)|}{(2j+1)^d} = r_0 - r_1 + r_2 - \dotsc + (-1)^d r_d.$$
	and the theorem follows.
\end{proof}		

\textbf{Now let $R=\mathbb{C}G$ where $G$ is a finitely generated group of subexponential growth.}
A group has subexponential growth whenever the growth function $\gamma_S(n)=\{g \in G: |g|_S=n\}$ with respect to any (equivalently all) finite generating sets $S$ of $G$ is a sequence of subexponential growth ($|g|_S$ is the distance of $g$ from $1$ along the Cayley graph $\Gamma(G,S)$).
Groups of subexponenitial growth are always amenable \cite{GGTLoeh}.
We finish this section with a generalization of Theorem 6.1.
The author believes that further study is needed.
A key obstacle is the lack of a Hilbert Syzygy theorem to group algebras of non-abelian groups.
Nonetheless, whenever the module of finite support eigenfunctions admits a finite resolution by finitely generated free $\mathbb{C}G$-modules, we obtain the same conclusion as in Theorem 6.1.

\begin{theorem}
	Suppose that $\Gamma$ is a $G$-periodic graph where $G$ is a finitely generated group of subexponential growth and $\lambda$ is an eigenvalue of $\Delta$ on $\Gamma$ (or any periodic difference operator $D$ of finite order).
	Let $K$ be the $\mathbb{C}G$-module of finite support eigenfunctions of $\lambda$.\\
	\textbf{If} $K$ admits a finite resolution by finitely generated free $R$-modules ($R=\mathbb{C}G$)
	$$0 \rightarrow R^{r_d} \rightarrow R^{r_{d-1}} \;\dotsc\; \rightarrow R^{r_1} \rightarrow R^{r_0} \rightarrow K \rightarrow 0.$$		
	\textbf{Then} the following formula about the density of $\{\lambda\}$ holds:
	$$ \nu(\{\lambda\}) = \dfrac{1}{|\Gamma / G|} \sum_{k=0}^{d} (-1)^k r_k.$$
\end{theorem}

\begin{proof}
	Note first that the hypothesis of Lemma 5.1 is satisfied since $r_0 < \infty$.
	Since $G$ has subexponential growth, fixing any generating set $S$ of $G$, the balls of radius $n$, $B(n)=\{g: |g|_S\leq n\}$ have a subsequence $F_j:=B(n_j)$ which is a $k$-thick Folner sequence for all $k \in \n$.
	This is a standard argument for groups of subexponential growth and can be found, for instance, in \cite{GGTLoeh}.
	
	For each $g\in G$, let $|g|:=min\{j:g\in B(n_j)\}$.
	For each $f_1 = \sum_g c_g g \in \mathbb{C}G$, let $|f_1|:=max\{|g|:c_g\neq 0\}$.
	Next, for each $f=(f_1,\dotsc,f_r)$ let $|f|:=max\{|f_1|,\dotsc,|f_r|\}$.
	For each $M$ is a submodule of $R^n$ let
	$$ B(M,j) := \{f \in M:\;|f|\leq j\}, \;\;\;\;\;\; |B(M,j)| := dim_{\cx}(\{f \in M:\;|f|\leq j\}),$$
	and finally, $\gamma(n)=|B(n)|$ is simply the growth function with respect to $S$.
	Similar to Lemma 6.2, one can show that 
	then there exists $j_0$ such that for all $j > j_0$ we have the estimate
	$$ \gamma(n_{j}-n_{j_0}) r - |B(Syz(M),j-j_0)| \;\;\leq\;\; |B(M,j)| \;\;\leq\;\; \gamma(n_{j}) r - |B(Syz(M),j)|,$$
	where $Syz(M)$ is the syzygy module with respect to a finite generating set of $M$ of size $r<\infty$.
	Finally, using the same telescoping argument as before along with the fact that for arbitrarily large $k \in \n$:
	$$ lim_{j \to \infty} \dfrac{\gamma(n_{j}-k)}{\gamma(n_{j})} = lim_{j \to \infty} \dfrac{\gamma(n_{j})}{\gamma(n_{j}+k)} = 1 - lim_{j \to \infty} \dfrac{|\partial_{k} F_j|}{|\partial_{k} F_j \cup F_j|} = 1-0=1,$$
	the theorem follows.
\end{proof}

\section{An example with nontrivial syzygy}

In this section, we provide an example of a $\z^3$-periodic graph $\Gamma$ such that $\lambda=-2$ is an eigenvalue of the adjacency operator $A$ on $\Gamma$, and the $\cx[x^{\pm 1}, y^{\pm 1}, z^{\pm 1}]$-module $K$ of finite support eigenfunctions is not free.
We find a generating set for $K$ and its first syzygy module using standard techniques from commutative algebra (the details of the computation are shown in Appendix B).
We then apply Theorem 1.1 to compute $\nu(\{-2\})$.
This example indicates that our analysis in Section 6 and Theorem 1.1 has mathematical content (if $K$ was always free, then these results would be pointless).

A simple example of a periodic graph with non-empty pure-point spectrum is the Kagome lattice.
It was studied in \cite{KagomeAndFiniteSupport} by Lenz, Peyerimhoff, Post and Veseli\'{c}, where they found a spanning set for all finite support eigenfunctions of the only eigenvalue of the Laplacian $\Delta$.
In particular, from their work it follows that the module of finite support eigenfunctions is generated by a single element (hence it is free).

Before we describe our example, let us first describe its natural analogue in two dimensions (to aid with visualizing the three dimensional example).
Define the sets $W:=\{(1,0),(0,1),(-1,0),(0,-1)\} \subset \real^2$
and $V := W + 4\z^2$, where $'+'$ denotes the usual sumset operation.
The edges of $\Gamma=(V,E)$ are defined by
$$ u \sim v \iff ||u-v||\leq 2,$$
where $u,v \in V$ and $||\cdot||$ is the standard Euclidean norm.
The group action $\z^2 \curvearrowright V$ is given by
$$(a,b)\cdot (m,n) := (m+4a, n+4b),$$
for all $(a,b)\in\z^2$ and $(m,n)\in V$.
It is easy to see that $\Gamma$ is a $\z^2$-periodic graph with fundamental domain $W$, and that all the vertices in $W$ are connected with each other.
An illustration of $\Gamma$ is shown in Figure 1.
Note that $e_1$ and $e_2$ denote the horizontal and vertical translations of $\z^2 \curvearrowright V$.

\begin{figure}[h]
	\begin{center}
		\includegraphics[scale=0.15]{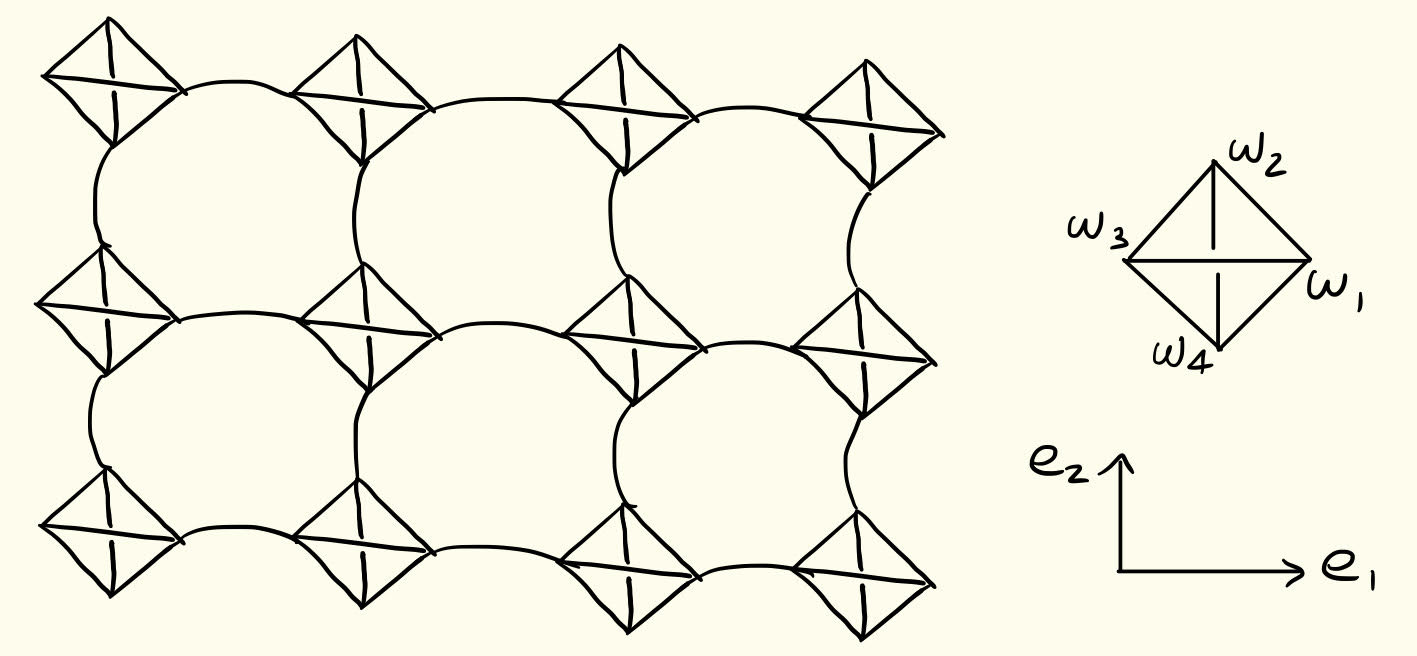}
		\caption{The two dimensional example}
	\end{center}
\end{figure}	

Consider the adjacency operator $A$ on $l^2(V)$ defined in Section 2.
If we order the vertices in $W$ as shown in Figure 1, the Floquet-Bloch transform of $A$ is given by the following $4\times 4$ matrix with entries in $\cx[x^{\pm 1}, y^{\pm 1}]$:
$$ \widehat{A} = 	
\begin{pmatrix}
	0			& 1			& 1+x 	& 1	\\
	1			& 0 		& 1 	& 1+y	\\
	1+x^{-1}	& 1			& 0 	& 1		\\
	1			& 1+y^{-1}	& 1 	& 0		\\	
\end{pmatrix},$$
where the $i^{th}$ row corresponds to all the neighbors of $w_i\in W$.
Figure 2 shows an eigenfunction of finite support of the eigenvalue $\lambda = -2$. The filled red circles are the vertices where the function takes value $+1$, the hollow red circles for $-1$, and the function is zero on the rest of the vertices. Its Floquet-Bloch transform is
$$\begin{pmatrix}
	-x+xy\\
	y-xy\\
	1-y\\
	-1+x
\end{pmatrix}.$$
By performing an analysis similar to the one we will carry for the three dimensional example, one can show that this eigenfunction generates the $\cx[x^{\pm 1}, y^{\pm 1}]$-module of finite support eigenfunctions of $\lambda = -2$.

\begin{figure}[h]
	\begin{center}
		\includegraphics[scale=0.18]{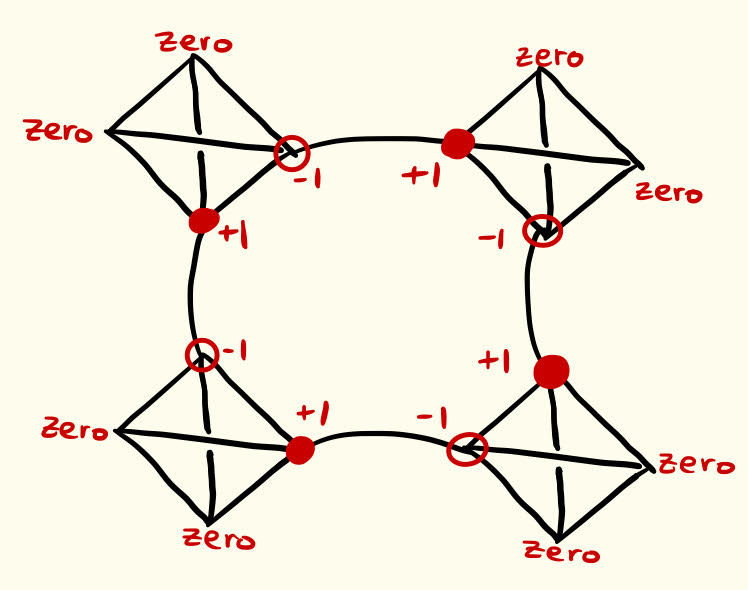}
		\caption{The generator of the module of finite support eigenfunctions}
	\end{center}
\end{figure}

We now turn to our example of interest.
Define the sets 
$$W:=\{(1,0,0),(0,1,0),(0,0,1),(-1,0,0),(0,-1,0),(0,0,-1)\},\;\;V:=W+4\z^3,$$
the edges $E$ of $\Gamma = (V,E)$:
$ u \sim v \iff ||u-v||\leq 2$ for all $u,v,\in V,$
and the group action $\z^3 \curvearrowright V$ by
$$(a,b,c) \cdot (m,n,p) := (m+4a,n+4b,p+4c),$$
where $(a,b,c)\in \z^3$ and $(m,n,p) \in V$.
Then $\Gamma$ is a $\z^3$-periodic graph with fundamental domain $W$, and all the vertices in $W$ are connected to each other.
Figure 3 attempts to illustrate $\Gamma$.
To the left, we have $8$ translated copies of the fundamental domain $W$ and the vibrating lines are the edges in $\Gamma$ (in addition to the edges that connect vertices in the same domain, which are not drawn here).
To the right, we have the induced subgraph of $\Gamma$ on $ [0,4]^3 \cap V$ (the vertices are in red and the edges in blue).

\begin{figure}[h]
	\begin{center}
		\includegraphics[scale=0.15]{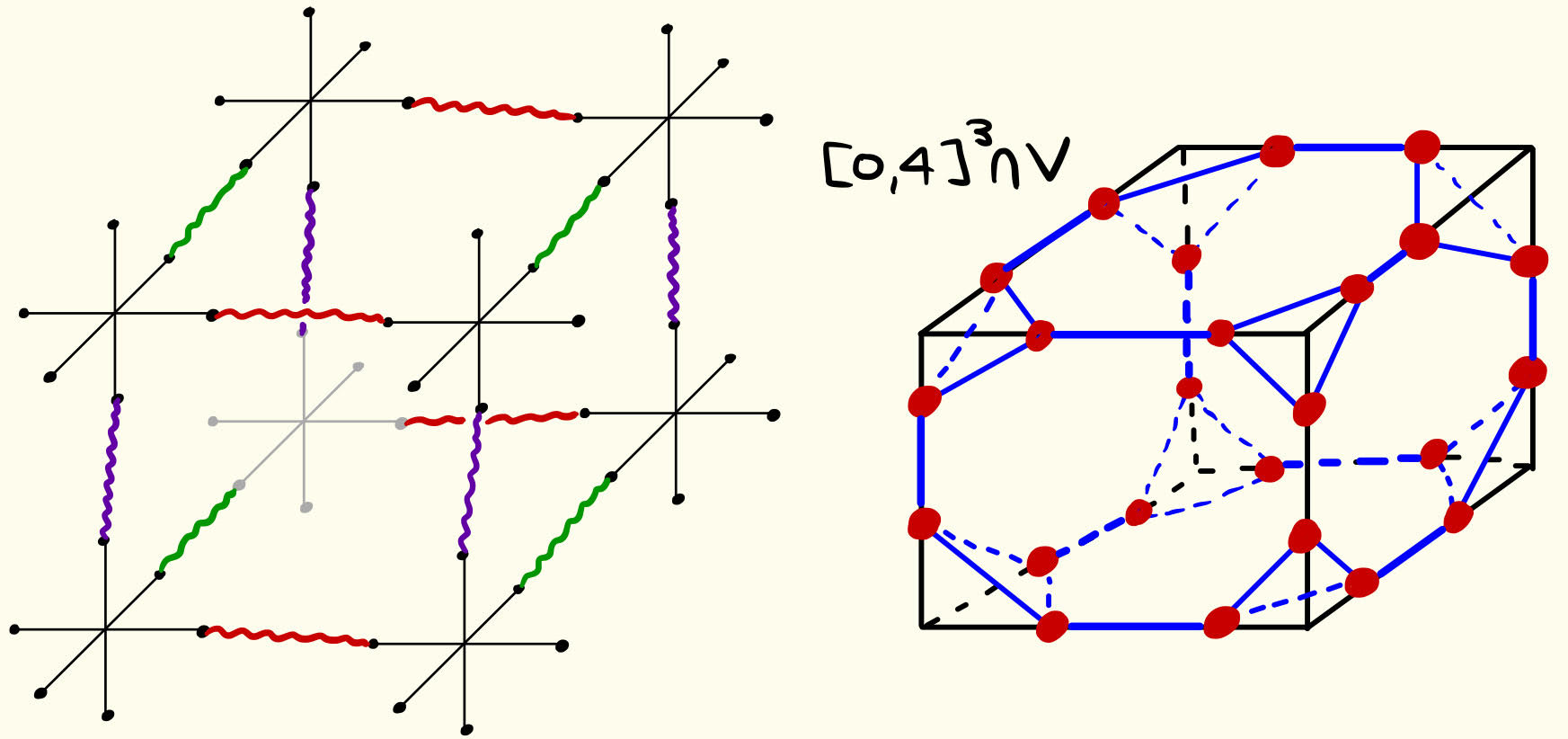}
		\caption{A $\z^3$-periodic graph with nontrivial syzygies}
	\end{center}
\end{figure}

Next, Figure 4 shows the $6$ vertices in $W$ and labels them $\{w_1,w_2,w_3,w_4,w_5,w_6\}$, so that we can write down a matrix representation for $\widehat{A}$.
Also shown in Figure 4 are the translations $e_1, e_2, e_3$ which generate the group action, and become multiplication by the monomials $x^{-1}$, $y^{-1}$, and $z^{-1}$ under the Floquet-Bloch transform.
The matrix representation of $\widehat{A}$ is given by:
$$\widehat{A}=\begin{pmatrix}
	0		& 1			& 1			& 1+x	& 1		& 1		\\
	1		& 0			& 1			& 1		& 1+y	& 1		\\
	1+x^{-1}& 1			& 0			& 1		& 1		& 1+z	\\
	1		& 1+y^{-1}	& 1			& 0		& 1		& 1		\\
	1		& 1			& 1			& 1		& 0		& 1		\\
	1		& 1			& 1+z^{-1}	& 1		& 1		& 0		\\
\end{pmatrix}.$$

\begin{figure}[h]
	\begin{center}
		\includegraphics[scale=0.14]{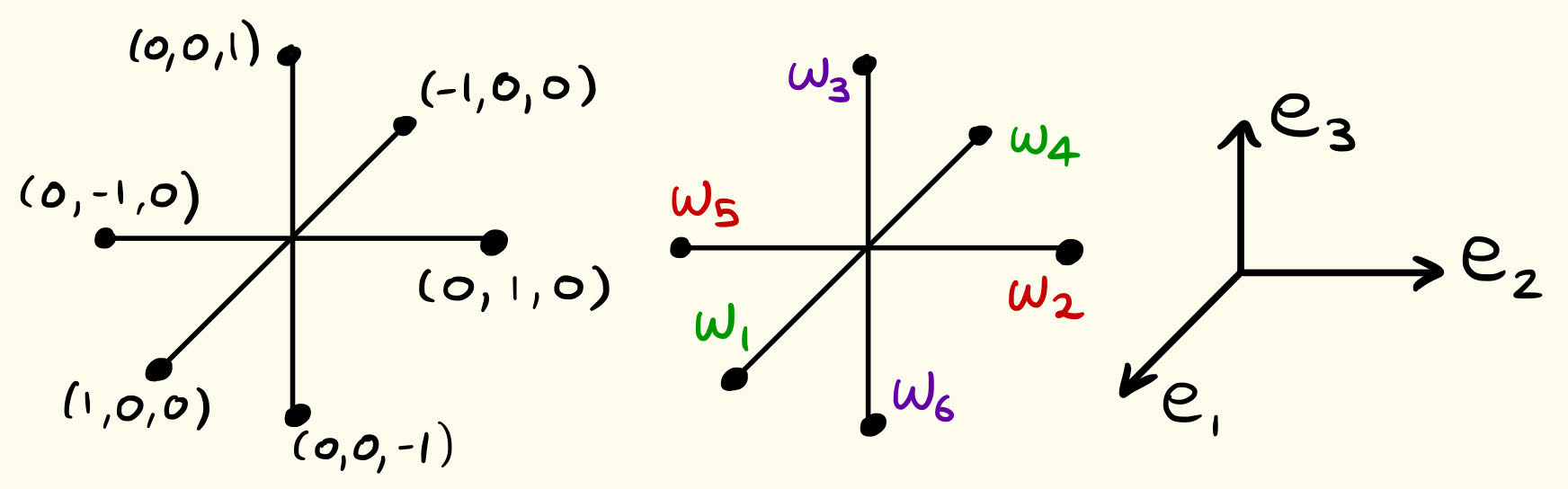}
		\caption{Labeling the fundamental domain and the group action}
	\end{center}
\end{figure}

Notice that $\lambda = -2$ is an eigenvalue of $A$, since $f^{(1)},f^{(2)},$ and $f^{(3)}$ shown in Figure 5 are finite support eigenfunctions (once again filled read means a value of $+1$ and hollow red means a value of $-1$).

\begin{figure}[h]
	\begin{center}
		\includegraphics[scale=0.17]{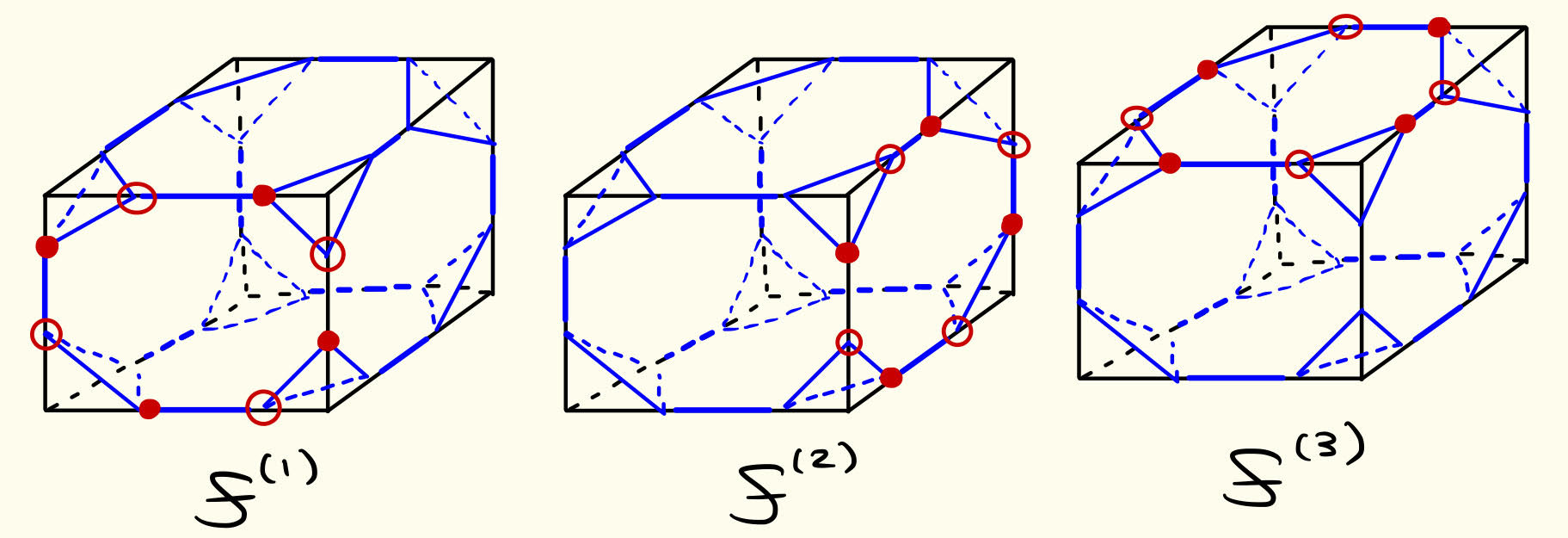}
		\caption{Generators of the module of finite support eigenfunctions}
	\end{center}
\end{figure}

Now, translate $f^{(1)},f^{(2)},$ and $f^{(3)}$ by $-e_1, -e_2$ and $-e_3$ respectively and multiply each by $-1$.
We get the eigenfunctions shown in Figure 6.
The sum of all six eigenfunctions sums up to zero.
After taking the Floquet-Bloch transform, this sum becomes the following syzygy relation:
$$(1-x)\widehat{f}^{(1)} + (1-y)\widehat{f}^{(2)} + (1-z)\widehat{f}^{(3)} = 0.$$

\begin{figure}[h]
	\begin{center}
		\includegraphics[scale=0.17]{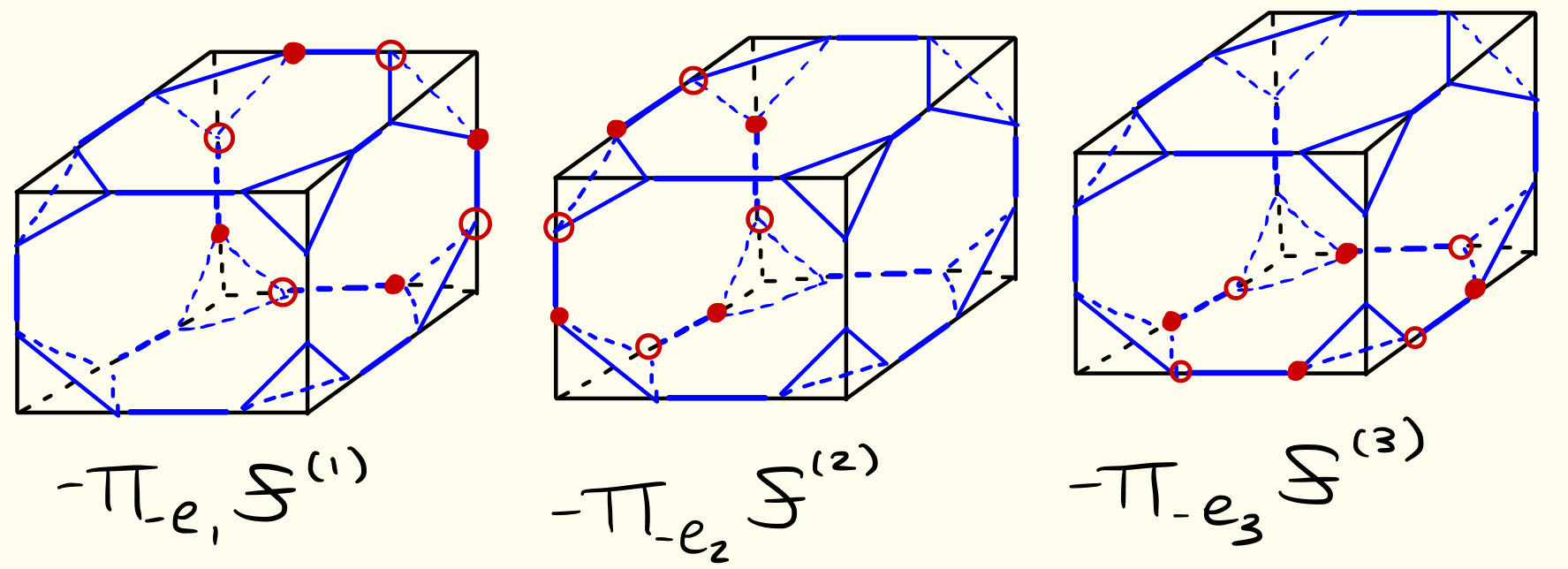}
		\caption{Translated copies of $f^{(1)},f^{(2)},$ and $f^{(3)}$}
	\end{center}
\end{figure}

\begin{proposition}
    The module of finite support eigenfunctions $K$ of $\lambda = -2$ is generated by $f^{(1)},f^{(2)},$ and $f^{(3)}$, and its first syzygy module is generated by $(1-x, 1-y, 1-z)$.
	Equivalently, we have a free resolution of K:
	$$0 \rightarrow \cx[x^{\pm1},y^{\pm1},z^{\pm1}] \rightarrow \bigoplus_{i=1}^3 \cx[x^{\pm1},y^{\pm1},z^{\pm1}] \rightarrow K \rightarrow 0,$$
	where the first map is $h \mapsto (h(1-x), h(1-y), h(1-z))$
	and the second map is $(h_1, h_2, h_3) \mapsto h_1 f^{(1)} + h_2 f^{(2)} + h_3 f^{(3)}$.
	In particular, by Theorem 1.1, $\nu(\{-2\}) = (1/6)(3-1) = 1/3$.
\end{proposition}

The proof of Proposition 7.1 is a long computation using standard commutative algebra techniques, and is included in Appendix B.
Observe that we have $3$ eigenfunctions up to translations and linear combinations and $6$ elements in the fundamental domain, so one could naively guess that $\nu(\{-2\})$ is $3/6=1/2$, which is a wrong guess.
Why does this intuition fail?
The functions $f^{(1)},f^{(2)},$ and $f^{(3)}$ are linearly independent, but the set of all translations of $f^{(1)},f^{(2)},$ and $f^{(3)}$ is linearly dependent.
How many dependence relations are there?
They are precisely given via the syzygy module.
We finish this section with the following conjecture generalizing the two preceding examples:

\textbf{Conjecture:} 
	Consider the standard basis $(\delta_{1i})_i, ... (\delta_{di})_i$ of $\z^d$ and the sets 
	$$W:=\{(\delta_{1i})_i, ... (\delta_{di})_i, (-\delta_{1i})_i, ... (-\delta_{di})_i\}, \;\;and \;\;V:=W+4\z^d.$$
	We define a graph $\Gamma$ by connecting $(u,v)\in V\times V$ whenever $||u-v||\leq 2$, and the group action $\z^d \curvearrowright V$ via:
	$$(a_1,\dots a_d)\cdot (v_1,\dots v_d) := (v_1+4 a_1, \dots v_d+4a_d),$$
	where $(a_1,\dots a_d)\in\z^d$ and $(v_1,\dots v_d)\in V$.
	Then $\lambda=-2$ is an eigenvalue of the adjacency operator $A$ on $\Gamma$ and has density $(d-1)/(2d)$.
	Moreover, the module of finite support eigenfunctions $K$ admits a free resolution of the form:
	$$0 \rightarrow R^{d\choose d} \;\dots\; \rightarrow R^{d\choose4} \rightarrow R^{d\choose 3} \rightarrow R^{d\choose2} \rightarrow K \rightarrow 0,$$
	where $R = \cx[z_1^{\pm 1},\dots,z_d^{\pm 1}]$, and the kernel of each map is not free (with the exception of the map $R^{d\choose d} \rightarrow R^{d\choose d-1}$).

\section*{Appendix A: Proofs of Lemma 2.4, Theorem 2.6, Theorem 2.7, and Lemma 5.1}

\begin{proof}[Proof of Lemma 2.4 (Thick Følner sequences)]
	The case when $r=1$ and $\Gamma = \Gamma(G,S)$ follows by the definition of amenability.
	Next, lets look at the case where $r>1$ and $\Gamma = \Gamma(G,S)$ .
	Consider the generating set $S':=S \cup S^2 \cup \dotsc S^r$
	where $S^r := \{s_1^{\epsilon_1}\dotsc s_r^{\epsilon_r} : s_1, \dotsc s_r \in S,\;\epsilon_1, \dotsc \epsilon_r \in \{+1,-1\}\}$.
	Since $G$ is amenable, there exists a 1-thick Følner sequence $\{F_j\}$.
	By construction, the $1$-thick boundary of $F_j$ with respect to $\Gamma(G,S^r)$
	is precisely the $r$-thick boundary of $F_j$ with respect to $\Gamma(G,S)$
	therefore $\{F_j\}$ is our desired Følner sequence with $l=r$.
	
	Next, we have the general case. 
	Fix a fundamental domain $W$ and a generating set $S$ of $G$.
	Each element $w \in W$ is connected to finitely many vertices $v \in V =  \sqcup_{g\in G} g \cdot W$.
	So for each such $v$, there exists a unique $g_v \in G, w_v \in W$ such that $v = g_v \cdot w_v$.
	Consider $|g_v|$, which is the distance of $g$ from $1$ in the Cayley graph $\Gamma(G,S)$.
	Out of all $v \sim w$ and all $w \in W$ pick the largest value of $|g_v|$ and call it $t$.
	Inductively, it follows that if $u \in g_u \cdot W, \;\; v \in g_v \cdot W$ and $d(u,v)\leq r$ then $|g_u g^{-1}_v| \leq r t$.
	It follows that for each $\mathcal{F} \subset G$, $\partial_r (\mathcal{F}\cdot W) \subset (\partial_l \mathcal{F})\cdot W$.
	By the previous case, we may choose an $l := r t$ thick Følner sequence of $\Gamma(G,S)$, $\{\mathcal{F}_j\}$
	and define $F_j := \mathcal{F}_j \cdot W$.
	We have:
	$$ \dfrac{|\partial_{r} F_j|}{|F_j|} \leq \dfrac{|(\partial_{r t}\mathcal{F}_j) \cdot W|}{|\mathcal{F}_j \cdot W|} =  \dfrac{|\partial_{r t}\mathcal{F}_j| |W|}{|\mathcal{F}_j| |W|} = \dfrac{|\partial_{r t}\mathcal{F}_j| }{|\mathcal{F}_j| }\to 0,$$
	hence $\{F_j\}_j$ is a standard $r$-thick Følner sequence of $\Gamma$.
\end{proof}

\begin{proof}[Proof of Theorem 2.6 (Strong Localization of Eingenfunctions, Kuchment-Veseli\'{c})]
	We outline the proof of Higuchi and Nomura from \cite{Higuchi:Nomura:Paper}.
	Since $\lambda$ is an eigenvalue, there exists $f \in l^2(V)\setminus \{0\}$ such that $\Delta f = \lambda f$.
	Without loss of generality $||f|| = 1$.
	Fix a fundamental domain $W$.
	Since $f \neq 0$ and translations of eigenfunctions are still eigenfunctions, without loss of generality $f(w)\neq 0$ for some $w\in W$.
	Picking an orthonormal basis $\{\phi_i\}_i$ of $l^2(V)$ such that $\phi_1 = f$ we have:
	$$ \nu(\{\lambda\}) = \dfrac{1}{|W|} tr(E(\{\lambda\})I_W) \geq \dfrac{\langle I_W E(\{\lambda\})f,f\rangle}{|W|}  = \dfrac{\langle I_W f,f \rangle}{|W|} = \dfrac{\sum_{w \in W} |f(w)|^2}{|W|} > 0.$$
	Next, since the action of $G$ commutes with $\Delta$, for all $g \in G$
	$$ |W| \nu(\{\lambda\}) = tr(E(\{\lambda\})I_W) = tr(E(\{\lambda\})\pi_{g} \pi_{g^{-1}}  I_W) = tr(E(\{\lambda\})I_{g \cdot W}).$$
	And hence for all finite subsets $\mathcal{F}$ of $G$, $\nu(\{\lambda\}) = \dfrac{1}{|\mathcal{F} \cdot W|} tr(E(B)I_{\mathcal{F}\cdot W})$.
	
	Now, pick a standard $l$-thick Følner sequence $\{F_j\}_j$, where $F_j = \mathcal{F}_j \cdot W$ for all $j$ and $l\in \n$ is the corresponding constant from Lemma 2.4 with $r=2$.
	Also, pick an orthonormal basis $\{\phi_1,\dotsc \phi_{m_j}\}$ of $I_{(\mathcal{F}_j\cup \partial_l \mathcal{F}_j)\cdot W}E(\{\lambda\})l^2(V)$
	and extend it to an orthonormal basis $\{\phi_j\}_{j=1}^{\infty}$ of $l^2(V)$.
	We have:
	$$ tr(I_{(\mathcal{F}_j\cup \partial_l \mathcal{F}_j)\cdot W}E(\{\lambda\}) = \sum_{i=1}^{m_j}\langle I_{(\mathcal{F}_j\cup \partial_l \mathcal{F}_j)\cdot W}E(\{\lambda\})\phi_i,\phi_i\rangle + \sum_{i=m_j+1}^{\infty}\langle I_{(\mathcal{F}_j\cup \partial_l \mathcal{F}_j)\cdot W}E(\{\lambda\})\phi_i,\phi_i\rangle$$
	$$ = \sum_{i=1}^{m_j}\langle I_{(\mathcal{F}_j\cup \partial_l \mathcal{F}_j)\cdot W}\phi_i,\phi_i\rangle \;\leq m_j.$$
	Using the above estimate, we claim that there exists $j$ such that $|\partial_l \mathcal{F}_j\cdot W| < m_j$.
	If not, that $|\partial_l \mathcal{F}_j\cdot W| \geq m_j$ for all $j$ and hence
	$$ 0 < \nu(\{\lambda\}) = \dfrac{1}{|(\mathcal{F}_j\cup \partial_l \mathcal{F}_j)\cdot W|} tr(I_{(\mathcal{F}_j\cup \partial_l \mathcal{F}_j)\cdot W}E(\{\lambda\}) \leq \dfrac{m_j}{|(\mathcal{F}_j\cup \partial_l \mathcal{F}_j)\cdot W|} \leq \dfrac{|\partial_l \mathcal{F}_j\cdot W|}{|F_j|} \to 0,$$
	a contradiction. 
	Picking $j$ such that $|\partial_l \mathcal{F}_j\cdot W| < m_j$,
	it follows that $\{I_{\partial_l \mathcal{F}_j\cdot W}\phi_1,\dotsc I_{\partial_l \mathcal{F}_j\cdot W}\phi_{m_j}\}$
	is a linearly dependent set so we can find $a_1, \dotsc a_{m_j}$ not all zero such that
	$$h:=\sum_{i=1}^{m_j} a_i \phi_i \equiv 0 \;\; on \; \partial_l \mathcal{F}_j\cdot W.$$		
	By Lemma 2.4, we chose $l$ so that $\partial_2 F_j \subset \partial_l \mathcal{F}_j\cdot W$. It follows that $I_{F_j} h$ is an eigenfunction with finite support inside $F_j$ which is nonzero due to the independence of the $\phi_i$ on $F_j$.
\end{proof}

\begin{proof}[Proof of Theorem 2.7 (Finite Support Approximation of Eigenfunctions, Kuchment-Veseli\'{c})]
	Let $\mathcal{M}$ be the closure of $D_{\lambda}(\Gamma)$ in $l^2(V)$ and $E_{\lambda}$ be the eigenspace of $\lambda$.
	Suppose the contrary, i.e. $\mathcal{M} \neq E_{\lambda}$.
	Then the orthogonal complement of $\mathcal{M}$ (with respect to the subspace $E_\lambda$) is non-trivial, $\mathcal{N} := \mathcal{M}^\perp \cap E_{\lambda} \neq \{0\}$.
	Note that since for all $g \in G, \;\; \pi_g D(\Gamma) = D(\Gamma)$,
	$$ f \in \mathcal{N} \iff \;for\;all\; h \in D_{\lambda}(\Gamma),\; \langle f,h \rangle =0 \iff $$
	$$ \;for\;all\; h \in D_{\lambda}(\Gamma),\; \langle \pi_g f,\pi_g  h\rangle =0 \iff \;for\;all\; h \in D_{\lambda}(\Gamma),\; \langle \pi_g f,h \rangle=0.$$
	Hence $\mathcal{N}$ is also invariant under translations.
	
	Next, consider the orthogonal projection onto $\mathcal{N}$, $P_{\mathcal{N}}$ and fix a fundamental domain $W$.
	Define the "density" of $\mathcal{N}$ as 
	$$ \nu:= \dfrac{1}{|W|} tr(I_W P_{\mathcal{N}})$$
	where the trace is taken with respect to $l^2(V)$.
	Pick some $f\in\mathcal{N}$ with $||f||=1$ and translated so that $f(w) \neq 0$ for some $w \in W$.
	Then by extending $\{f\}$ to an orthonormal basis, we conclude that $\nu > 0$.
	Since $\mathcal{N}$ is invariant under translations, it also follows that $\nu = \dfrac{1}{|\mathcal{F} \cdot W|} tr(I_{\mathcal{F}\cdot W} P_{\mathcal{N}})$ for every finite subset $\mathcal{F}$ of $G$.
	
	Now, pick a standard $l$-thick Følner sequence $\{F_j\}_j$, where $F_j = \mathcal{F}_j \cdot W$ for all $j$ and $l\in \n$ is the corresponding constant from Lemma 2.4 with $r=2$.
	For all $j$, pick an orthonormal basis $\{\psi_1,\dotsc \psi_{n_j}\}$ of $I_{(\mathcal{F}_j\cup \partial_l \mathcal{F}_j)\cdot W}P_{\mathcal{N}} l^2(V)$,
	and extend it to an orthonormal basis $\{\psi_j\}_{j=1}^{\infty}$ of $l^2(V)$.
	We conclude that $tr(I_{(\mathcal{F}_j\cup \partial_l \mathcal{F}_j)\cdot W} P_{\mathcal{N}}) \leq n_j$.
	
	Similar to before, if for all $j$, $|\partial_l \mathcal{F}_j\cdot W| \geq n_j$, then $0 < \nu < \dfrac{|\partial_l \mathcal{F}_j\cdot W|}{|F_j|} \to 0$
	This cannot happen, so there is some $j$ such that $|\partial_l \mathcal{F}_j\cdot W| \leq n_j$.
	We can then find $b_1, \dotsc b_j$ not all zero such that 
	$$ h:=\sum_{i=1}^{n_j} b_i \psi_i \equiv 0 \;\; on \; \partial_l \mathcal{F}_j\cdot W.$$
	By Lemma 2.4, we chose $l$ so that $\partial_2 F_j \subset \partial_l \mathcal{F}_j\cdot W$. It follows that $h$ is a finite support eigenfunction, and by definition we get: $<h,f> = 0 $ for all $f \in \mathcal{N}$.
	
	Here comes the contradiction.
	For all $i=1\dotsc n_j$, $\psi_i \in I_{(\mathcal{F}_j\cup \partial_l \mathcal{F}_j)\cdot W}P_{\mathcal{N}} l^2(V) = I_{(\mathcal{F}_j\cup \partial_l \mathcal{F}_j)\cdot W} \mathcal{N}$, so we can extend $\psi_i$ to $\bar{\psi_i} \in \mathcal{N}$ such that $I_{(\mathcal{F}_j\cup \partial_l \mathcal{F}_j)\cdot W} \bar{\psi_i} = \psi_i$.
	Notice that $\sum_{i=1}^{n_j} b_j \bar{\psi_i} \in \mathcal{N}$.
	We have:
	$$0 = \;\langle h, \sum_{i=1}^{n_j} b_j \bar{\psi_i}\rangle \; = \sum_{v \in F_j} |h(v)|^2 \neq 0.$$
	This is a contradiction, therefore $\mathcal{N} = \emptyset \implies \mathcal{M} = E_{\lambda}$ and the claim follows.		
\end{proof}

\begin{proof}[Proof of Lemma 5.1 (Finite Support Approximation for the density of an eigenvalue)]
	Suppose that $\widehat{D_{\lambda}(\Gamma)}$ is generated by $\{\hat{f}^{(1)}, \dotsc, \hat{f}^{(r)}\}$.
	Consider the supports of $f^{(1)}, \dotsc, f^{(r)}$ which are all finite, 
	hence we can find a finite $K\subset G$ such that 
	$\cup_{i=1}^r supp(f^{(i)}) \subset \cup_{g \in K} g \cdot W$ and let $j_0 := 2 max_{g \in K} |g|$.
	Take a standard $j_0$-thick Følner sequence $\{F_j\}_j$.
	As we saw in the proof of Theorem 2.6, $\nu(\{\lambda\}) = \dfrac{1}{|F_j|} tr(E(\{\lambda\})I_{F_j})$.
	Define the $\cx$-vector spaces
	$$U_j := Span\{ \pi_g f^{(i)} : g \in G,\; i=1,\dotsc,r \; supp(\pi_g f^{i})\subset F_j\},$$
	$$W_j := Span\{ \pi_g f^{(i)} : g \in G,\; i=1,\dotsc,r \; supp(\pi_g f^{i})\subset F_j \cup \partial_{j_0} F_j\}.$$
	By Theorem 2.7, the (closed) subspace of $l^2(V)$ generated by $\{ \pi_g f^{(i)} : g \in G,\; i=1,\dotsc,r\}$ is precisely $E_\lambda$.
	By our choice of $j_0$, for all $\pi_g f^{(i)} \in \{\pi_g f^{(i)} : supp(\pi_g f^{i}) \not\subset F_j \cup \partial_{j_0} F_j, 1 \leq i \leq r\}$  we have that
	$supp(\pi_g f^{(i)}) \cap F_j = \emptyset$,
	hence $\langle I_{F_j} \pi_g f^{(i)}, \pi_g f^{(i)}\rangle =0$ and we conclude that $tr(E(\{\lambda\})I_{F_j}) \leq dim_{\cx} W_j$.
	On the other hand, each element $\phi$ of an orthonormal basis for $U_j$ satisfies $\langle I_{F_j} \phi, \phi \rangle = ||\phi||^2= 1$.
	We end up with the estimate:
	$$ dim_{\cx} U_j \leq tr(E(\{\lambda\})I_{F_j}) \leq dim_{\cx} W_j.$$
	Next, we work on estimating $dim_{\cx}\{f \in E_{\lambda} : supp(f) \subset F_j\}$.
	Obviously $U_j \subset \{f \in E_{\lambda} : supp(f) \subset F_j\}$.
	On the other hand, if $f \in E_{\lambda}$ with $supp(f) \subset F_j$, $f$ is the linear combination of translations of $f^{(1)}, \dots, f^{(r)}$. 
	The values of $f$ on $F_j$ depend only through terms $\pi_g f^{(i)}$ whose support is inside $F_j \cup \partial_{j_0} F_j$, by the construction of $j_0$.
	Hence there is a function $f' \in W_j$ which is equal to $f$ on $F_j$.
	We get a map from $\{f \in E_{\lambda} : supp(f) \subset F_j\}$ to $W_j$ sending $f$ to $f'$ which is obviously injective.
	We conclude that for all $j$:
	$$ dim_{\cx} U_j \leq dim_{\cx} \{f \in E_{\lambda} : supp(f) \subset F_j\} \leq dim_{\cx} W_j.$$
	Finally, we ask: what is $q_j := |\{ \pi_g f^{i} : g \in G,\; i=1,\dots,r \; supp(\pi_g f^{i})\not\subset F_j \;but\; supp(\pi_g f^{i})\subset F_j \cup \partial_{j_0} F_j\}|$? We get the obvious bound $q_j \leq |\partial_{j_0} \mathcal{F}_j | r $.
	Dividing by $|F_j|$, we take the limit as $j \to \infty$:
	$$0 \leq lim_{j \to \infty} \dfrac{dim_{\cx} W_j - dim_{\cx} U_j}{|F_j|} \leq lim_{j \to \infty} \dfrac{q_j}{|F_j|} \leq lim_{j \to \infty} \dfrac{|\partial_{j_0} \mathcal{F}_j| r}{|F_j|} = 0.$$
	Therefore, by squeezing between $U_j$ and $W_j$: $$\nu(\{\lambda\}) = \dfrac{1}{|F_j|} tr(E(\{\lambda\})I_{F_j}) = lim_{j \to \infty} \dfrac{dim_{\cx}\{f \in D_{\lambda}(\Gamma) : supp(f) \subset F_j\}}{|F_j|},$$
	and the proof is complete.
\end{proof}

\section*{Appendix B: Computation for Proposition 7.1}

To find all eigenfunctions of finite support is equivalent to solving the following system of linear equations in $\cx[x^{\pm},y^{\pm},z^{\pm}]$ with unknowns $f_1, f_2, f_3, f_4, f_5, f_6 \in \cx[x^{\pm},y^{\pm},z^{\pm}]$ corresponding to the six vertices in $W$:
$$(\widehat{A}+2I)
\begin{pmatrix}
	f_1\\ f_2\\ f_3\\ f_4\\ f_5\\ f_6
\end{pmatrix}
=
\begin{pmatrix}
	2		& 1			& 1			& 1+x	& 1		& 1		\\
	1		& 2			& 1			& 1		& 1+y	& 1		\\
	1+x^{-1}& 1			& 2			& 1		& 1		& 1+z	\\
	1		& 1+y^{-1}	& 1			& 2		& 1		& 1		\\
	1		& 1			& 1			& 1		& 2		& 1		\\
	1		& 1			& 1+z^{-1}	& 1		& 1		& 2		\\
\end{pmatrix}
\begin{pmatrix}
	f_1\\ f_2\\ f_3\\ f_4\\ f_5\\ f_6
\end{pmatrix} = 
\begin{pmatrix}0\\0\\0\\0\\0\\0\end{pmatrix}
.$$
Using row operations over the ring $\cx[x^{\pm},y^{\pm},z^{\pm}]$, the above system is equivalent to:
$$
\begin{pmatrix}
	4		& 0		& 0		& 1+3x	         & 1-y	          & 1-z	\\
	0		& 4		& 0		& 1-x            & 1+3y	          & 1-z	\\
	0       & 0		& 4		& 1-x	         & 1-y	          & 1+3z	\\
	0		& 0     & 0		& (1-z)(1-x^{-1})& (1-y)(1-x^{-1})& (1-z)(1-x^{-1})	\\
	0		& 0		& 0 	& 0		         & 0	          & 0		\\
	0		& 0		& 0	    & 0		         & 0              & 0		\\
\end{pmatrix}
\begin{pmatrix}	f_1\\ f_2\\ f_3\\ f_4\\ f_5\\ f_6\end{pmatrix} = 
\begin{pmatrix}0\\0\\0\\0\\0\\0\end{pmatrix},
$$
and hence our system reduces to the equations:
$$-4f_1 = (1+3x)f_4 + (1-y)f_5 + (1-z)f_6,\;\;\;\;\;\;(1)$$
$$-4f_2 = (1-x)f_4 + (1+3y)f_5 + (1-z)f_6,\;\;\;\;\;\;(2)$$
$$-4f_3 = (1-x)f_4 + (1-y)f_5 + (1+3z)f_6,\;\;\;\;\;\;(3)$$
$$(1-x)f_4 + (1-y)f_5 + (1-z)f_6 = 0.$$
In particular, if we solve the last equation, we can get $f_1, f_2,$ and $f_3$ from equations $(1)-(3)$. The last equation is equivalent to computing the syzygy module of $<1-x,1-y,1-z> \subset \cx[x^{\pm1},y^{\pm1},z^{\pm1}]$ with respect to the generating set $\{1-x,1-y,1-z\}$.
At this point, we need to use some commutative algebra, namely Theorem 3.2 from Chapter 5 in \cite{UsingAlgGeom}. 
This theorem provides generators for $Syz(M)$ of a module $M$ with respect to a generating set which is a Gr{\"o}bner basis.
Fix the monomial order $x<y<z$.
The set $\{1-x,1-y,1-z\}$ has $S$-polynomials:
$$S(1-x,1-y) = y (1-x) - x (1-y) = y - x = (1-x) - (1-y),$$
$$S(1-x,1-z) = z (1-x) - x (1-z) = z - x = (1-x) - (1-z),$$
$$S(1-y,1-z) = z (1-y) - x (1-z) = z - y = (1-y) - (1-z),$$
where the last equality in each line is the result of a long division with respect to $\{1-x,1-y,1-z\}$.
Since we get zero remainders, by Buchberger's criterion, $\{1-x,1-y,1-z\}$ is a Gr{\"o}bner basis.
By Theorem 3.2 from Chapter 5 in \cite{UsingAlgGeom}, $Syz(1-x,1-y,1-z) \subset \bigoplus_{i=1}^3 \cx[x^{\pm1},y^{\pm1},z^{\pm1}]$ is generated by the following three elements:
$$ s_{12} = \begin{pmatrix} -(1-y)\\    (1-x)\\    0        \end{pmatrix},\;
s_{13} = \begin{pmatrix}    -(1-z)\\        0\\   (1-x)     \end{pmatrix},\;
s_{23} = \begin{pmatrix}    0\\         (1-z)\\   -(1-y)     \end{pmatrix}. $$
Therefore, the solution set to equation $(1-x)f_4 + (1-y)f_5 + (1-z)f_6 = 0$ is parametrized by $h_1, h_2, h_3 \in \cx[x^{\pm1},y^{\pm1},z^{\pm1}]$ and is given by
$$
\begin{pmatrix} f_4 \\ f_5 \\ f_6   \end{pmatrix} = h_3 s_{12} + h_2 s_{13} + h_1 s_{23} = 
\begin{pmatrix}
    -(1-y) h_3 - (1-z) h_2 \\
    (1-x) h_3 + (1-z) h_1 \\
    (1-x) h_2 - (1-y) h_1
\end{pmatrix}
$$
Plugging in the solution for $(f_4, f_5, f_6)^T$, we can solve for $(f_1, f_2, f_3)^T$ using equations $(1)-(3)$:
$$
\begin{pmatrix} f_1 \\ f_2 \\ f_3   \end{pmatrix} = 
\begin{pmatrix}
    (x-xy) h_3 - (x-xz) h_2 \\
    (-y+xy) h_3 + (-y+yz) h_1 \\
    (-z+xz) h_2 - (-z+yz) h_1
\end{pmatrix}
$$
Putting it all together, the set of solutions to $(\widehat{A} + 2I)\cdot (f_1, f_2, f_3, f_4, f_5, f_6)^T = \Vec{0}$ is parameterized by $h_1, h_2, h_3 \in \cx[x^{\pm1},y^{\pm1},z^{\pm1}]$ and is given by:
$$
\begin{pmatrix}f_1\\f_2\\f_3 \\f_4\\f_5\\f_6 \end{pmatrix}
= h_1 \begin{pmatrix}
 0\\-y+yz\\z-yz\\0\\1-z\\-1+y   \end{pmatrix}
+ h_2 \begin{pmatrix}
 x-xz\\0\\-z+xz\\-1+z\\0\\1-x \end{pmatrix}  
+ h_3 \begin{pmatrix}
 x-xy\\-y+xy\\0\\-1+y\\1-x\\0 \end{pmatrix}
= h_1 \widehat{f}^{(1)} + h_2 \widehat{f}^{(2)} + h_3 \widehat{f}^{(3)},$$
where $\widehat{f}^{(1)}, \widehat{f}^{(2)}, \widehat{f}^{(3)}$ are the Floquet-Bloch transforms of the eigenfunctions in Figure 5.

It remains to show that $Syz(\widehat{f}^{(1)}, \widehat{f}^{(2)}, \widehat{f}^{(3)}) = <(1-x,1-y,1-z)>$, i.e. to find all solutions $(p_1,p_2,p_3)\in\bigoplus_{i=1}^3 \cx[x^{\pm1},y^{\pm1},z^{\pm1}]$ to the equation $p_1 \widehat{f}^{(1)} + p_2 \widehat{f}^{(2)} + p_3 \widehat{f}^{(3)} = 0$. Equivalently, we wish to solve the system:
$$
\begin{pmatrix}
    0       & x-xz  & -x+xy\\
    -y+yz   &0      & y-xy\\
    z-yz    & -z+xz & 0\\
    0       & -1-z  &1-y\\
    1-z     & 0     &-1+x\\
    -1+y    & 1-x   & 0 \\
\end{pmatrix}
\begin{pmatrix}p_1\\p_2\\p_3\end{pmatrix}=
\begin{pmatrix}0\\0\\0\\0\\0\\0\end{pmatrix}\;\;\;(\star).
$$
Looking at the $4^{th}$ row, we get 
$$-(1-z) p_2 + (1-y) p_3 = 0.$$
Since $\cx[x^{\pm1},y^{\pm1},z^{\pm1}]$ is a unique factorization domain, $(1-z)|p_3$ and hence there is $q_3 \in \cx[x^{\pm1},y^{\pm1},z^{\pm1}]$ such that $p_3 = (1-z)q_3$.
Similarly, we can find $q_2$ such that $p_2 = (1-y) q_2$.
The $4^{th}$ row then becomes $-(1-z)(1-y)q_2+(1-z)(1-y)q_3=0$, hence $q_2=q_3=:q$.
Next, looking at the $5^{th}$ row we have
$$(1-z) p_1 - (1-x)(1-z)q = 0,$$ hence $(1-x)|p_1$, hence we can find $q_1$ such that $p_1 = (1-x)q_1$.
The $5^{th}$ row then becomes $ (1-z)(1-x)q_1 - (1-x)(1-z)q = 0$, hence $q_1 = q$.

We have shown that if $(p_1,p_2,p_3)^T$ solves $(\star)$, then there exists $q \in \cx[x^{\pm1},y^{\pm1},z^{\pm1}]$ such that $(p_1,p_2,p_3)^T = ((1-x)q, (1-y)q, (1-z)q)$.
Since we have already seen that $(1-x,1-y,1-z) \in Syz(\widehat{f}^{(1)}, \widehat{f}^{(2)}, \widehat{f}^{(3)})$, we conclude that
$Syz(\widehat{f}^{(1)}, \widehat{f}^{(2)}, \widehat{f}^{(3)}) = <(1-x,1-y,1-z)>$
and Proposition 7.1 follows.

\section*{Acknowledgements}
The author is grateful to Prof. Rostislav Grigorchuk for invaluable advice, support and guidance throughout the project and to Prof. Peter Kuchment for additional invaluable advice on writing and context. 
The author is also grateful Prof. Christophe Pittet for interest to this work and valuable remarks.
Last but not least, the author is grateful to the referees for pointing out lots of related literature and plenty of small mistakes and typos throughout the paper.

\newpage
\bibliographystyle{plain}
\bibliography{references}

\end{document}